\begin{document}

\theoremstyle{plain}

\newtheorem{thm}{Theorem}[section]
\newtheorem{lem}[thm]{Lemma}
\newtheorem{conj}[thm]{Conjecture}
\newtheorem{pro}[thm]{Proposition}
\newtheorem{cor}[thm]{Corollary}
\newtheorem{que}[thm]{Question}
\newtheorem{rem}[thm]{Remark}
\newtheorem{defi}[thm]{Definition}

\newtheorem*{thmA}{Theorem A}
\newtheorem*{thmB}{Theorem B}
\newtheorem*{thmC}{Theorem C}

\newtheorem*{thmAcl}{Main Theorem$^{*}$}
\newtheorem*{thmBcl}{Theorem B$^{*}$}

\numberwithin{equation}{section}

\newcommand{\Maxn}{\operatorname{Max_{\textbf{N}}}}
\newcommand{\Syl}{\operatorname{Syl}}
\newcommand{\dl}{\operatorname{dl}}
\newcommand{\Con}{\operatorname{Con}}
\newcommand{\cl}{\operatorname{cl}}
\newcommand{\Stab}{\operatorname{Stab}}
\newcommand{\Aut}{\operatorname{Aut}}
\newcommand{\Ker}{\operatorname{Ker}}
\newcommand{\fl}{\operatorname{fl}}
\newcommand{\Irr}{\operatorname{Irr}}
\newcommand{\SL}{\operatorname{SL}}
\newcommand{\GL}{\operatorname{GL}}
\newcommand{\SU}{\operatorname{SU}}
\newcommand{\GU}{\operatorname{GU}}
\newcommand{\Sp}{\operatorname{Sp}}
\newcommand{\Spin}{\operatorname{Spin}}
\newcommand{\PGL}{\operatorname{PGL}}
\newcommand{\PGU}{\operatorname{PGU}}
\newcommand{\FF}{\mathbb{F}}
\newcommand{\NN}{\mathbb{N}}
\newcommand{\N}{\mathbf{N}}
\newcommand{\C}{\mathbf{C}}
\newcommand{\OO}{\mathbf{O}}
\newcommand{\F}{\mathbf{F}}

\renewcommand{\labelenumi}{\upshape (\roman{enumi})}

\newcommand{\PSL}{\operatorname{PSL}}
\newcommand{\PSU}{\operatorname{PSU}}

\providecommand{\St}{\mathsf{St}}
\providecommand{\E}{\mathrm{E}}
\providecommand{\PSp}{\mathrm{PSp}}
\providecommand{\Sp}{\mathrm{Sp}}
\providecommand{\SO}{\mathrm{SO}}
\providecommand{\Irrr}{\mathrm{Irr_{rv}}}
\providecommand{\re}{\mathrm{Re}}

\def\irrp#1{{\rm Irr}_{p'}(#1)}

\def\Z{{\mathbb Z}}
\def\C{{\mathbb C}}
\def\Q{{\mathbb Q}}
\def\irr#1{{\rm Irr}(#1)}
\def\ibr#1{{\rm IBr}(#1)}
\def\ipi#1{{\bf I}_\pi(#1)}
\def\ibrt#1{{\rm IBr}_{2'}(#1)}

\def\irrv#1{{\rm Irr}_{\rm rv}(#1)}
\def \c#1{{\mathcal #1}}
\def\cent#1#2{{\bf C}_{#1}(#2)}
\def\syl#1#2{{\rm Syl}_#1(#2)}
\def\nor{\triangleleft\,}
\def\oh#1#2{{\bf O}_{#1}(#2)}
\def\Oh#1#2{{\bf O}^{#1}(#2)}
\def\zent#1{{\bf Z}(#1)}
\def\det#1{{\rm det}(#1)}
\def\ker#1{{\rm ker}(#1)}
\def\norm#1#2{{\bf N}_{#1}(#2)}
\def\alt#1{{\rm Alt}(#1)}
\def\iitem#1{\goodbreak\par\noindent{\bf #1}}
   \def \mod#1{\, {\rm mod} \, #1 \, }
\def\sbs{\subseteq}
\def\irrtb#1{{\rm Irr}_{p'}({\rm B_0}(#1))}

\newcommand{\Out}{{\mathrm {Out}}}
\newcommand{\Mult}{{\mathrm {Mult}}}
\newcommand{\Inn}{{\mathrm {Inn}}}
\newcommand{\IBR}{{\mathrm {IBr}}}
\newcommand{\IBRL}{{\mathrm {IBr}}_{\ell}}
\newcommand{\IBRP}{{\mathrm {IBr}}_{p}}
\newcommand{\ord}{{\mathrm {ord}}}
\def\id{\mathop{\mathrm{ id}}\nolimits}
\renewcommand{\Im}{{\mathrm {Im}}}
\newcommand{\Ind}{{\mathrm {Ind}}}
\newcommand{\diag}{{\mathrm {diag}}}
\newcommand{\soc}{{\mathrm {soc}}}
\newcommand{\End}{{\mathrm {End}}}
\newcommand{\sol}{{\mathrm {sol}}}
\newcommand{\Hom}{{\mathrm {Hom}}}
\newcommand{\Mor}{{\mathrm {Mor}}}
\newcommand{\Mat}{{\mathrm {Mat}}}
\def\rank{\mathop{\mathrm{ rank}}\nolimits}
\newcommand{\Tr}{{\mathrm {Tr}}}
\newcommand{\tr}{{\mathrm {tr}}}
\newcommand{\Gal}{{\it Gal}}
\newcommand{\Spec}{{\mathrm {Spec}}}
\newcommand{\ad}{{\mathrm {ad}}}
\newcommand{\Sym}{{\mathrm {Sym}}}
\newcommand{\Char}{{\mathrm {char}}}
\newcommand{\pr}{{\mathrm {pr}}}
\newcommand{\rad}{{\mathrm {rad}}}
\newcommand{\abel}{{\mathrm {abel}}}
\newcommand{\codim}{{\mathrm {codim}}}
\newcommand{\ind}{{\mathrm {ind}}}
\newcommand{\Res}{{\mathrm {Res}}}
\newcommand{\Ann}{{\mathrm {Ann}}}
\newcommand{\Ext}{{\mathrm {Ext}}}
\newcommand{\Alt}{{\mathrm {Alt}}}
\newcommand{\AAA}{{\sf A}}
\newcommand{\SSS}{{\sf S}}
\newcommand{\CC}{{\mathbb C}}
\newcommand{\CB}{{\mathbf C}}
\newcommand{\RR}{{\mathbb R}}
\newcommand{\QQ}{{\mathbb Q}}
\newcommand{\ZZ}{{\mathbb Z}}
\newcommand{\NB}{{\mathbf N}}
\newcommand{\ZB}{{\mathbf Z}}
\newcommand{\OB}{{\mathbf O}}
\newcommand{\EE}{{\mathbb E}}
\newcommand{\PP}{{\mathbb P}}
\newcommand{\GC}{{\mathcal G}}
\newcommand{\HC}{{\mathcal H}}
\newcommand{\AC}{{\mathcal A}}
\newcommand{\SC}{{\mathcal S}}
\newcommand{\TC}{{\mathcal T}}
\newcommand{\CL}{{\mathcal C}}
\newcommand{\EC}{{\mathcal E}}
\newcommand{\GCD}{\GC^{*}}
\newcommand{\TCD}{\TC^{*}}
\newcommand{\FD}{F^{*}}
\newcommand{\GD}{G^{*}}
\newcommand{\HD}{H^{*}}
\newcommand{\GCF}{\GC^{F}}
\newcommand{\TCF}{\TC^{F}}
\newcommand{\PCF}{\PC^{F}}
\newcommand{\GCDF}{(\GC^{*})^{F^{*}}}
\newcommand{\RGTT}{R^{\GC}_{\TC}(\theta)}
\newcommand{\RGTA}{R^{\GC}_{\TC}(1)}
\newcommand{\Om}{\Omega}
\newcommand{\eps}{\epsilon}
\newcommand{\varep}{\varepsilon}
\newcommand{\al}{\alpha}
\newcommand{\chis}{\chi_{s}}
\newcommand{\sigmad}{\sigma^{*}}
\newcommand{\PA}{\boldsymbol{\alpha}}
\newcommand{\gam}{\gamma}
\newcommand{\lam}{\lambda}
\newcommand{\la}{\langle}
\newcommand{\ra}{\rangle}
\newcommand{\hs}{\hat{s}}
\newcommand{\htt}{\hat{t}}
\newcommand{\tn}{\hspace{0.5mm}^{t}\hspace*{-0.2mm}}
\newcommand{\ta}{\hspace{0.5mm}^{2}\hspace*{-0.2mm}}
\newcommand{\tb}{\hspace{0.5mm}^{3}\hspace*{-0.2mm}}
\def\skipa{\vspace{-1.5mm} & \vspace{-1.5mm} & \vspace{-1.5mm}\\}
\newcommand{\tw}[1]{{}^#1\!}
\renewcommand{\mod}{\bmod \,}

\marginparsep-0.5cm

\renewcommand{\thefootnote}{\fnsymbol{footnote}}
\footnotesep6.5pt

\title{Weights and Nilpotent Subgroups}
\author{Gabriel Navarro}
\address{Department of Mathematics,   Universitat de Val\`encia, 46100 Burjassot,
Val\`encia, Spain}
\email{gabriel.navarro@uv.es}
\author{Benjamin Sambale}
\address{Institut f\"ur Mathematik, Friedrich-Schiller-Universit\"at Jena, 07737 Jena, Germany}
\email{benjamin.sambale@uni-jena.de}

\thanks{The research of the first  author is supported by 
MTM2016-76196-P and Prometeo/Generalitat Valenciana.
The second author thanks the German Research
Foundation (projects SA 2864/1-1 and SA 2864/3-1).}

\keywords{Alperin Weight Conjecture, 
Nilpotent subgroups, Carter subgroups}

\subjclass[2010]{Primary 20C15; Secondary 20C20}

\begin{abstract}
In a finite group $G$, we consider nilpotent weights,
and prove a $\pi$-version of the Alperin Weight Conjecture
for certain $\pi$-separable groups. This widely generalizes an earlier result by I. M. Isaacs and the first author.
\end{abstract}

\maketitle

\section{Introduction} 
\medskip
Let $G$ be a finite group and let $p$ be a prime.
The celebrated Alperin Weight Conjecture asserts that
the number of conjugacy classes of $G$ consisting of
elements of order not divisible by $p$ is exactly the number
of $G$-conjugacy classes of $p$-weights. Recall that a {\bf $p$-weight}
is a pair $(Q,\gamma)$, where $Q$ is a $p$-subgroup of $G$
and $\gamma \in \irr{\norm GQ/Q}$ is an irreducible complex
character with $p$-defect zero (that is,
such that the $p$-part $\gamma(1)_p=|\norm GQ/Q|_p$).

In the main result of this paper, we replace $p$ by a set of primes $\pi$ as follows:

\begin{thmA}
Let $G$ be a $\pi$-separable group with a solvable Hall $\pi$-subgroup. Then the number of conjugacy classes of
$\pi'$-elements of $G$ is the number of $G$-conjugacy classes of pairs
$(Q, \gamma)$, where $Q$ is a nilpotent $\pi$-subgroup of $G$
and $\gamma \in \irr{\norm GQ/Q}$ has $p$-defect zero for every $p\in\pi$.
\end{thmA}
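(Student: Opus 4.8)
The plan is to argue by induction on $|G|$, with Clifford theory relative to a minimal normal subgroup as the engine, and with the $p$-solvable case of the Alperin Weight Conjecture together with Carter's theorem on the conjugacy of Carter subgroups of finite solvable groups as the two external inputs.

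First I would dispose of the base situations. One may delete from $\pi$ every prime not dividing $|G|$: this changes neither the set of $\pi'$-classes nor the set of weights, since for such a prime the defect-zero requirement on $\gamma$ is automatic. If then $\pi\cap\pi(G)=\emptyset$, then $G$ is a $\pi'$-group, the only nilpotent $\pi$-subgroup is trivial, the defect-zero conditions are vacuous, and both numbers equal $k(G)$. If instead $\pi(G)\subseteq\pi$, i.e.\ $G$ is a (solvable) $\pi$-group, then $k_{\pi'}(G)=1$; on the weight side, a finite group $H$ has an irreducible character that is simultaneously of $p$-defect zero for every prime $p$ dividing $|H|$ only when $H=1$ (such a character would have degree $|H|$), so the nilpotent $\pi$-weights of $G$ are exactly the pairs $(Q,1)$ with $Q$ nilpotent and self-normalizing in $G$ — that is, $Q$ a Carter subgroup of $G$ — and these form a single $G$-class by Carter's theorem, so the weight count is $1$ as well.

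For the inductive step, recall that $O_p$ of any finite group is contained in the defect group of each of its $p$-blocks, so a $p$-block of defect zero forces $O_p=1$; hence a nilpotent $\pi$-weight $(Q,\gamma)$ must satisfy $O_\pi(N_G(Q))=Q$, and the number $w(G)$ of weights equals $\sum_{[Q]}w_0(N_G(Q)/Q)$, summed over $G$-classes of self-$\pi$-radical nilpotent $\pi$-subgroups, where $w_0(H)$ counts the irreducible characters of $H$ of $p$-defect zero for all $p\in\pi$ (and the $Q=1$ term vanishes unless $O_\pi(G)=1$). Now pick a minimal normal subgroup $N$ of $G$; by $\pi$-separability $N$ is either a $\pi'$-group, or $O_{\pi'}(G)=1$ and we may take $N\le O_\pi(G)$ elementary abelian. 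If $N$ is a $\pi'$-group, I run Clifford theory over $N$ on both sides: on the counting side via $k_{\pi'}(G)=|{\bf I}_\pi(G)|=\sum_{[\theta]}|{\bf I}_\pi(G_\theta\mid\theta)|$, with $\theta$ over $G$-orbits of $\irr N$ and ${\bf I}_\pi$ the $\pi$-partial characters of Isaacs; on the weight side, for self-$\pi$-radical $Q$ the character $\gamma$ lies over the characters of the normal $\pi'$-subgroup $C_N(Q)Q/Q\cong C_N(Q)$ of $N_G(Q)/Q$, and the Glauberman correspondence between $\irr{C_N(Q)}$ and the $Q$-invariant characters of $N$ attaches to each weight a $G$-orbit in $\irr N$, yielding a weight-Clifford-correspondence with $G_\theta$. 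For $\theta$ not $G$-invariant, both correspondences pass to the proper subgroups $G_\theta$ — again $\pi$-separable with solvable Hall $\pi$-subgroup — where induction applies; for $\theta$ invariant, I replace $(G,N,\theta)$ by an isomorphic character triple with $N$ central, factor out $\ker\theta$, and compare with $G/N$. If instead $N\le O_\pi(G)$, then $G\to G/N$ induces a bijection on $\pi'$-classes, so $k_{\pi'}(G)=k_{\pi'}(G/N)$, and by induction it remains to prove the purely weight-theoretic identity $w(G)=w(G/N)$ via the map $Q\mapsto QN/N$.

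The genuinely hard part is this matching of weights — both through a normal $\pi$-subgroup and in the weight-Clifford-correspondence of the $\pi'$ case. When $|\pi|=1$ one has $O_p(G)\subseteq Q$ for every radical $p$-subgroup $Q$, so $Q\mapsto Q/N$ is transparently inclusion-preserving and normalizer-compatible; here a nilpotent $\pi$-subgroup need not contain a normal $\pi$-subgroup $N$, nor need $QN$ be nilpotent, so the passage $Q\mapsto QN/N$ has to be controlled by Carter-subgroup arguments — using that $\pi$-subgroups of $G$ are solvable, that Carter subgroups survive quotients, and that the self-$\pi$-radical nilpotent $\pi$-subgroups of $G$ with a prescribed image in $G/N$ form a single $G$-class. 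A second point, new for $|\pi|\ge 2$, is that in the $G$-invariant case one must choose a single character-triple isomorphism respecting the $p$-defect-zero condition at \emph{every} prime of $\pi$ at once — in effect a Fong reduction carried out over all of $\pi$ simultaneously — and verify that none of the auxiliary groups produced leaves the class of $\pi$-separable groups with a solvable Hall $\pi$-subgroup, so that the induction stays legitimate. Carter's theorem is invoked again at the base of the recursion, whenever a section $N_G(Q)/Q$ turns out to be a $\pi$-group.
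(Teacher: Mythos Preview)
Your outline has the right skeleton---induction, Clifford theory over a minimal normal subgroup, the Glauberman correspondence, and Carter's theorem to control the nilpotent weights---and your identification of the two hard points is accurate. But there is a genuine gap at the decisive step, and your list of external inputs is insufficient.

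The gap is in the case where $N$ is a normal $\pi'$-subgroup and $\theta\in\irr N$ is $G$-invariant. You propose to pass to an isomorphic character triple with $N$ central and then ``compare with $G/N$'', noting that one needs ``a Fong reduction carried out over all of $\pi$ simultaneously''. The problem is that a generic character-triple isomorphism (Isaacs' Theorem~11.28) gives no control over degrees or $p$-defects, and there is no elementary construction of one that preserves $\pi$-defect zero for all $p\in\pi$ at once. This is precisely where the paper invokes the Dade--Puig theorem (via Turull's \emph{Above the Glauberman correspondence}): it supplies an isomorphism of character triples $(G,L,\tau)\cong(\norm GQ,\cent LQ,\tau^*)$, with $\tau^*$ the $Q$-Glauberman correspondent of $\tau$, that \emph{does} preserve the ratio $\chi(1)/\tau(1)$ and hence all $\pi'$-parts of degrees. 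That theorem rests on Dade's classification of endo-permutation modules; the paper says explicitly that the Okuyama--Wajima argument, which suffices for $|\pi|=1$, does not extend. So neither the $p$-solvable case of AWC nor Carter's theorem will close this step for you---you need Dade--Puig or an equivalent.

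There is also a structural difference worth noting. The paper does not prove Theorem~A directly; it proves the vertex-refined Theorem~B, fixing a nilpotent subgroup $R$ and counting the $\pi$-partial characters whose vertex has $R$ as a Carter subgroup against $|\ipi{\norm GR\mid R}|$. The induction runs on $|G:M|$ for a normal $\pi$-subgroup $M$ (in the swapped notation), first reducing $M$ to central via an ordinary character-triple isomorphism, then factoring out $\oh{\pi'}G$ using exactly the Carter-subgroup arguments you describe, and finally handling $L=\oh{\pi}G$ via the Dade--Puig input (their Theorem~4.1). Packaging the count by vertex is what makes the Dade--Puig step apply cleanly. Your direct approach to the global equality $w(G)=w(G/N)$ may be workable, but the invariant-$\theta$ reduction over a normal $\pi'$-subgroup will still require the same deep ingredient.
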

Recall that a finite group is called \textbf{$\pi$-separable} if all its composition factors are $\pi$-groups or $\pi'$-groups.
Let us restate Theorem A in the (presumably trivial) case where $G$ itself is a (solvable) $\pi$-group.
In this case, there is only one conjugacy class of $\pi'$-elements of $G$. On the other hand,
if $Q$ is a nilpotent subgroup of $G$, then  $\gamma \in \irr{\norm GQ/Q}$
has $p$-defect zero for every $p\in\pi$ if and only if
$\norm GQ=Q$. Amazingly enough, there is only one conjugacy class of self-normalizing
nilpotent subgroups: the Carter subgroups of $G$ (see p. 281 in \cite{R}).

\medskip
Of course, if $\pi=\{p\}$, then Theorem A is the $p$-solvable case
of the Alperin Weight Conjecture (AWC). As a matter of fact, AWC was proven for $\pi$-separable groups with a nilpotent Hall $\pi$-subgroup by Isaacs and the first author~\cite{IN}. Now we realize that the nilpotency hypothesis can be dropped if one counts nilpotent weights instead.
The solvability hypothesis is still needed, as shown by $G={\sf A}_5$ and $\pi=\{2,3,5\}$.

There is a price to pay, however. The proof in \cite{IN} relied on the so called Okuyama--Wajima argument, a 
definitely  non-trivial but accessible
tool on extensions of Glauberman correspondents.
In order to prove Theorem~A, however, we shall need to appeal to a deeper theorem of
 Dade and Puig 
 (which uses  Dade's classification of the endo-permutation modules). 
 
 \medskip
As it is often the case in a ``solvable'' framework, the equality of cardinalities in Theorem~A has a hidden structure which we are going to explain now. 
For sake of convenience we interchange from now on the roles of $\pi$ and $\pi'$ (of course, $\pi$-separable is equivalent to $\pi'$-separable).
 Recall that in a $\pi$-separable group $G$, the set $\ipi G$ of 
   irreducible $\pi$-partial characters of $G$ is the exact $\pi$-version 
 (when $\pi$ is the complement of a prime $p$) of the irreducible Brauer characters
 $\ibr G$ of a $p$-solvable group (see next section for precise definitions). Each $\varphi \in \ipi G$ has canonically
 associated a $G$-conjugacy class of $\pi'$-subgroups $Q$, which are called the {\bf vertices}
 of $\varphi$. If $\ipi{G|Q}$ is the set of irreducible $\pi$-partial characters with vertex $Q$, unless
 $\pi=p'$, it is not in general true that  $|\ipi{G|Q}|= |\ipi{\norm GQ|Q}|$.
Instead we will prove the following theorem.
 
 \begin{thmB}
 Suppose that $G$ 
is $\pi$-separable with a solvable Hall $\pi$-complement.
Let $R$ be a nilpotent $\pi'$-subgroup 
of $G$ and let $\mathcal Q$ be the set of $\pi'$-subgroups $Q$ of $G$
such that $R$
is a Carter subgroup 
of $Q$.
Then  
$$\Bigl|\bigcup_{Q \in {\mathcal Q}}\ipi{G|Q}\Bigr|=|\ipi{\norm GR|R}| \,.$$
 \end{thmB}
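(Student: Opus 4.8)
The plan is to argue by induction on $|G|$. Set $H=\norm GR$ and $D=\oh{\pi'}G$. Because the vertices of an irreducible $\pi$-partial character form a single $G$-conjugacy class of $\pi'$-subgroups and $\mathcal Q$ is a union of such classes, the left-hand side of the displayed formula is just the number of $\varphi\in\ipi G$ whose vertices have $R$ as a Carter subgroup; this is meaningful because $G$ has a solvable Hall $\pi'$-subgroup, so each vertex is solvable and does possess Carter subgroups (so even parsing the statement uses the hypothesis). The right-hand side is the number of $\psi\in\ipi H$ with vertex $R$. Three standard ingredients will be used throughout: (a) a normal $\pi'$-subgroup lies in every vertex of every member of $\ipi G$, which, together with routine preliminary reductions, lets us assume $D\ne 1$; (b) the $\pi$-partial-character Clifford correspondence $\ipi{G|\vartheta}\to\ipi{G_\vartheta|\vartheta}$ over $D$, which preserves vertices; and (c) a $\pi$-version of the Green correspondence relating $\ipi{G|Q}$ and $\ipi{\norm GQ|Q}$, of the kind established in \cite{IN} under a nilpotency hypothesis.

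I would then split according to the action of $G$ on $\irr D$. Suppose first that some $\vartheta\in\irr D$ lying under a character counted on the left has proper inertia group $G_\vartheta<G$. Replacing $\vartheta$ by an $R$-conjugate chosen so that $R_0:=R\cap G_\vartheta$ is as large as possible (legitimate since $R$ normalises $D$ and permutes $\irr D$), we may apply the inductive hypothesis inside $G_\vartheta$ to the nilpotent $\pi'$-subgroup $R_0$. The vertex-preserving Clifford correspondence carries the condition ``the vertex has $R$ as a Carter subgroup'' over to the corresponding condition for $R_0$ in $G_\vartheta$, while $\norm{G_\vartheta}{R_0}$ accounts for the relevant part of $\norm GR=H$. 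Matching up the $G$-classes in $\mathcal Q$ with the $G_\vartheta$-classes of the corresponding $\pi'$-subgroups is the routine but somewhat tedious part of this step.

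The decisive case is the one in which every $\vartheta\in\irr D$ lying under a relevant character is $G$-invariant, for then $|G|$ cannot be reduced. Here I would pass to an isomorphic character triple in which $D$ is replaced by a central cyclic $\pi'$-group. In this normalised situation, counting the $\pi$-partial characters over $\vartheta$ with a prescribed vertex $Q$ reduces to extending a Glauberman--Isaacs correspondent across the action of the solvable $\pi'$-group $Q$. Since the Hall $\pi'$-subgroups of $G$ are only assumed solvable, $Q$ itself need not be nilpotent, and such an extension cannot be produced prime-by-prime as in the Okuyama--Wajima argument of \cite{IN}; instead it must be obtained from the theorem of Dade and Puig, which rests on Dade's classification of endo-permutation modules. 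Following the correspondence along a chief series of $Q$, one lands in data attached to $\norm GR$ for $R$ a Carter subgroup of $Q$, and the outcome depends on $Q$ only through $R$. This is the structural reason the equality holds: the various sets $\ipi{G|Q}$ with $Q\in\mathcal Q$, taken over the $G$-classes in $\mathcal Q$, assemble into the single set $\ipi{\norm GR|R}$.

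The principal obstacle is exactly this invariant case: one must check that the composite of the Clifford correspondence, the character-triple isomorphism, the $\pi$-Green correspondence, and the Dade--Puig extension is a genuine bijection between $\bigcup_{Q\in\mathcal Q}\ipi{G|Q}$ and $\ipi{\norm GR|R}$. In particular one has to show that the local correspondences, which individually cease to be bijective as soon as the Hall $\pi'$-subgroup fails to be nilpotent, carry precisely the redundancy encoded by the $G$-classes of $\pi'$-subgroups with common Carter subgroup $R$. Tracking vertices through the character-triple reduction and invoking the endo-permutation classification at the right moment (via Dade--Puig) are the technically heaviest points; that solvability of the Hall $\pi'$-subgroup is indispensable is, as for Theorem~A, already visible in small non-solvable examples.
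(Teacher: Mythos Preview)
Your proposal has the right list of ingredients---induction, Clifford theory, character-triple reduction, and the Dade--Puig theorem---but it applies them to the wrong normal subgroup, and this is a genuine gap rather than a cosmetic difference.

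You organize the argument around $D=\oh{\pi'}G$: Clifford over $\irr D$, then in the $G$-invariant case a character-triple reduction making $D$ a central $\pi'$-group, and finally ``extending a Glauberman--Isaacs correspondent across the action of the solvable $\pi'$-group $Q$.'' But Glauberman's correspondence and the Dade--Puig theory above it require a \emph{coprime} action; here $Q$ is a $\pi'$-group acting on a $\pi'$-group $D$, so there is no Glauberman correspondent to extend. The machinery only becomes available for a normal $\pi$-subgroup $L$ on which the $\pi'$-group acts, and that is how the paper proceeds: one first disposes of $\oh{\pi'}G$ by the easy observation that it lies in every vertex (so one may factor it out), and the substantive Clifford/Dade--Puig step is then carried out over $L=\oh\pi G$. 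Relatedly, your claim that $D=1$ is handled by ``routine preliminary reductions'' is not right: when $\oh{\pi'}G=1$ the work has barely begun.

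Two further points. First, the ``$\pi$-Green correspondence'' you invoke in (c) does not exist in this generality: as the paper explicitly remarks before stating Theorem~B, in general $|\ipi{G|Q}|\ne|\ipi{\norm GQ|Q}|$ once the Hall $\pi'$-subgroup is merely solvable. The paper circumvents this by applying the Dade--Puig step only in the special situation $LR\nor L\norm GR$ (Theorem~4.1 with $Q=R$), which yields $|\ipi{L\norm GR|R,\varphi}|=|\ipi{\norm GR|R,\varphi}|$; it does \emph{not} descend along a chief series of a general $Q\in\mathcal Q$. Second, your induction on $|G|$ is too weak to close the invariant case. The paper strengthens the statement by carrying along a normal $\pi$-subgroup $M$ with a $G$-invariant $\varphi\in\irr M$ and inducts on $|G:M|$; this is exactly what allows one, after Clifford over $L$, to replace $M$ by $L$ and invoke the hypothesis in $G_\tau$ even when $G_\tau=G$.
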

 
Since $|\ipi{\norm GR|R}|$ is just the number of $\pi'$-weights with first component $R$ (see Lemma~6.28 of \cite{I2}), Theorem~B implies Theorem~A. 
\medskip

 As happens in the classical case where $\pi=p'$, and following the ideas
 of Dade, Kn\"orr and Robinson, one can define
 chains of $\pi'$-subgroups and relate them with
 $\pi$-defect of characters. This shall be explored elsewhere.
Similarly, one can attach every weight to a $\pi'$-block $B$ of $G$ by using Slattery's theory~\cite{Slattery}. In this setting we expect that the number of $\pi$-partial characters belonging to $B$ equals the number of nilpotent weights attached to $B$. 

The groups described in Theorem~A are sometimes called $\pi$-solvable. We did not find a counterexample in the wider class of so-called $\pi$-selected groups. Here, $\pi$-\textbf{selected} means that the order of every composition factor is divisible by at most one prime in $\pi$. P. Hall~\cite{H} has shown that these groups still have solvable Hall $\pi$-subgroups. Since every finite group is $p$-selected for every prime $p$, this version of the conjecture includes AWC in full generality.
 
 \medskip
 
Unfortunately, Theorem~A does not hold for arbitrary groups even if 
they possess nilpotent Hall $\pi$-subgroups. It is not so easy to find a counterexample, though. The fourth Janko group $G=J_4$ has a cyclic Hall $\pi$-subgroup of order $35$ (that is, $\pi=\{5,7\}$). The normalizers of the non-trivial $\pi$-weights are contained in a maximal subgroup $M$ of type $2^{3+12}.(S_5\times L_3(2))$. However, $l(G)-k_0(G)=25\ne 30=l(M)-k_0(M)$, where $l(G)$ denotes the number of $\pi'$-conjugacy classes and $k_0(G)$ is the number of $\pi$-defect zero characters of $G$.
(The fact that $J_4$ was a counterexample for the $\pi$-version of the McKay conjecture
for groups with a nilpotent Hall $\pi$-subgroup was  noticed by Pham H. Tiep and the first author.)
 
We take the opportunity to thank the developers of \cite{GAP}. Without their tremendous work the present paper probably would  not exist.

 \medskip
The paper is organized as follows: In the next section we review $\pi$-partial characters which were introduced by Isaacs. In Section~3 we present two general lemmas on characters in $\pi$-separable groups. Afterwards we prove Theorem~B. In the final section we construct a natural bijection explaining Theorem~B in the presence of a normal Hall $\pi$-subgroup.

\section{Review of $\pi$-theory}

Isaacs' $\pi$-theory is the $\pi$-version in $\pi$-separable groups
of the $p$-modular representation theory for
$p$-solvable groups. When $\pi=p'$, the complement of 
 a prime, then  $\ipi G=\ibr G$ and we recover most of the well-known classical
 results. In what follows $G$ is a finite
$\pi$-separable group, where $\pi$ is a set of primes. All the references for $\pi$-theory can now be found together in Isaacs' recent book~\cite{I2}. For the reader's convenience, we review some of
the main features.  If $n$ is a natural number and $p$ is a prime,
recall that $n_p$ is the largest power of $p$ dividing $n$. If $\pi$ is a set of primes,
then $n_\pi=\prod_{p \in \pi} n_p$. The number $n$ is a $\pi$-number if $n=n_\pi$.

\medskip

If $G$ is a $\pi$-separable group, then $G^0$ is the set of elements of $G$
whose order is a $\pi$-number. A $\pi$-partial character of $G$ is the restriction of
a complex character of $G$ to $G^0$. A $\pi$-partial character is {\bf irreducible}
if it is not the sum of two $\pi$-partial characters. We write $\ipi G$ for the set
of irreducible $\pi$-partial characters of $G$.  Notice that if $\mu \in \ipi G$
by definition there exists $\chi \in \irr G$ such that $\chi^0=\mu$, where $\chi^0$
denotes the restriction of $\chi$ to the $\pi$-elements of $G$. 
Also, it is clear by the definition, that every $\pi$-partial character
is a sum of irreducible $\pi$-partial characters. Notice that if $G$ is a $\pi$-group,
then $\ipi G=\irr G$.

\medskip

\begin{thm}[Isaacs]\label{ipi}
Let $G$ be a finite $\pi$-separable group. Then $\ipi G$ is a basis of the
space of class functions defined on $G^0$. In particular, $|\ipi G|$ is the number
of conjugacy classes of $\pi$-elements of $G$.
\end{thm}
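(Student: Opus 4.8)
The plan is to reduce the statement to the linear independence of $\ipi G$. Spanning is immediate: since $G^0$ is a union of conjugacy classes, every class function on $G^0$ extends by zero to a class function on $G$, so restriction $\mathrm{cf}(G)\to\mathrm{cf}(G^0)$ is onto and hence the functions $\chi^0$ ($\chi\in\irr G$) span $\mathrm{cf}(G^0)$; as each $\chi^0$ is by definition a sum of members of $\ipi G$, the set $\ipi G$ spans. Writing $k$ for the number of $G$-classes of $\pi$-elements, this already gives $|\ipi G|\ge\dim\mathrm{cf}(G^0)=k$, so what remains is the reverse inequality $|\ipi G|\le k$, which (given spanning) amounts to linear independence, and which also delivers the ``in particular''. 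I would prove $|\ipi G|\le k$ by induction on $|G|$.

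The base cases are clear: if $G$ is a $\pi'$-group then $G^0=\{1\}$ and $\ipi G=\{1_G^0\}$, while if $G$ is a $\pi$-group then $G^0=G$ and $\ipi G=\irr G$. So assume $G$ is neither. If $\oh{\pi'}G\ne1$, set $\bar G=G/\oh{\pi'}G$. A Schur--Zassenhaus argument shows that two $\pi$-elements of $G$ lying in the same coset of $\oh{\pi'}G$ are already conjugate inside $\oh{\pi'}G$, so the $G$-classes of $\pi$-elements of $G$ biject with those of $\bar G$; thus $k(G)=k(\bar G)$, and it suffices to prove that inflation maps $\ipi{\bar G}$ onto $\ipi G$ (it is visibly injective). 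The substantive point is that every $\varphi\in\ipi G$ is inflated from $\bar G$: writing $\varphi=\chi^0$ and analysing $\chi$ relative to the normal $\pi'$-subgroup $\oh{\pi'}G$ by ordinary Clifford theory, one shows $\oh{\pi'}G$ lies in the ``kernel'' of the irreducible $\pi$-partial character $\varphi$. Then induction applied to $\bar G$ finishes this case.

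If $\oh{\pi'}G=1$, then $N:=\oh\pi G\ne1$ is a nontrivial normal $\pi$-subgroup, so $N\subseteq G^0$ and $\ipi N=\irr N$, and ordinary Clifford theory is available on $N$; I would transport it to $\pi$-partial characters. Concretely: for $\varphi\in\ipi G$ one checks that $\varphi_N$ is a multiple of the orbit sum of a single $G$-orbit in $\irr N$; one establishes a Clifford correspondence $\ipi{G\mid\theta}\leftrightarrow\ipi{G_\theta\mid\theta}$, where the key legitimising computation is that induction of $\pi$-partial characters is the restriction to $G^0$ of ordinary induction; and one has the partition $\ipi G=\bigsqcup_\theta\ipi{G\mid\theta}$ over $G$-orbit representatives $\theta$ in $\irr N$. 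When some inertia group $G_\theta$ is proper, induction on $|G|$ --- carried through with a refined statement that keeps track of $\theta$ on the $\pi$-element side --- completes the step; the remaining case, where every $\theta\in\irr N$ is $G$-invariant (in particular $N\le\zent G$), I would handle by a character-triple / projective-representation reduction passing to a strictly smaller group. The main obstacle, I expect, is not the bookkeeping but setting up this Clifford machinery for $\pi$-partial characters --- objects that live only on $G^0$, so that restriction, induction and orthogonality must be re-established there --- and in particular the delicate fact used above, that a normal $\pi'$-subgroup is contained in the kernel of every irreducible $\pi$-partial character.
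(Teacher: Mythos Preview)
The paper does not prove this result; it simply cites Theorem~3.3 of \cite{I2}. So your proposal is an independent attempt, and its overall shape---spanning is immediate, then bound $|\ipi G|$ by induction on $|G|$, splitting on whether $\oh{\pi'}G$ is trivial---is the natural one. But the key step has a real gap.

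In the case $N:=\oh{\pi'}G\ne1$ you assert that every $\varphi\in\ipi G$ is inflated from $\bar G=G/N$, to be seen by ``analysing $\chi$ relative to $N$ by ordinary Clifford theory''. That does not work as stated: different lifts $\chi\in\irr G$ of the same $\varphi$ lie over different $\theta\in\irr N$, and nothing in ordinary Clifford theory forces the particular $\chi$ you chose (or any $\chi$ at all) to have $N$ in its kernel. What you actually need is that the \emph{semigroup} of $\pi$-partial characters of $G$ coincides with the inflated one from $\bar G$---equivalently, that some ordinary character of $G/N$ restricts to $\varphi$ on $G^0$. This is a Fong--Swan type statement and is essentially the content of the theorem itself, not a corollary of Clifford theory for one chosen lift. (The same semigroup equality is also what guarantees your inflation map lands in $\ipi G$ rather than merely among $\pi$-partial characters.) Isaacs' actual proof in \cite{I2} does not descend modulo $\oh{\pi'}G$; instead he constructs explicit irreducible lifts directly, via $\pi$-special characters and nuclei, and shows that their restrictions to $G^0$ are linearly independent of the right cardinality. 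A related circularity lurks in your $\oh{\pi'}G=1$ step: the Clifford correspondence $\ipi{G_\theta\mid\theta}\to\ipi{G\mid\theta}$ you invoke is, in \cite{I2}, developed \emph{after} Theorem~3.3 and uses it, so you cannot import it wholesale. You rightly flag both points as ``delicate'' at the end, but they are exactly where the proof lives, not obstacles to be bookkept around.
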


\begin{proof}
This is Theorem 3.3 of \cite{I2}. 
\end{proof}

\medskip
We can induce and restrict $\pi$-partial characters in a natural way.
If $H$ is a subgroup of $G$ and $\varphi \in \ipi G$, then 
$\varphi_H=\sum_{\mu \in \ipi H} a_\mu \mu$
for some uniquely defined nonnegative integers $a_\mu$. We write $\ipi{G|\mu}$
to denote the set of  $\varphi \in \ipi G$
such that $a_\mu \ne 0$.

\medskip

A   non-trivial result is that Clifford's theory holds for 
$\pi$-partial characters. If $N\nor G$, it is then clear that  $G$ naturally acts on $\ipi N$
by   conjugation.

\begin{thm}[Isaacs]\label{ipicliff}
Suppose that $G$ is $\pi$-separable and $N \nor G$.

\begin{enumerate}[(a)]
\item
If $\varphi \in \ipi G$, then $\varphi_N=e(\theta_1 + \cdots + \theta_t)$,
where $\theta_1,\ldots,  \theta_t$ are all the $G$-conjugates of
some $\theta \in \ipi N$.

\item
If $\theta \in \ipi N$ and $T=G_\theta$ is the stabilizer of $\theta$
in $G$, then induction defines a bijection $\ipi{T|\theta} \rightarrow \ipi{G|\theta}$.
\end{enumerate}
\end{thm}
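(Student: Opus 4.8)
Since both parts are due to Isaacs, the proof to be given amounts to a citation of \cite{I2}. If one instead reconstructs it, the plan is to run the classical machinery of Clifford theory after first manufacturing its $\pi$-partial counterparts. The three ingredients needed are: (1) that $\chi\mapsto\chi^0$ commutes with induction and with restriction, so that induction and restriction of $\pi$-partial characters are well defined and obey the usual formalism; (2) the formula asserting that $(\theta^G)_N$ is a positive integer multiple of the sum of the $G$-conjugates of $\theta$, for $\theta\in\ipi{N}$; and (3) Mackey's formula for $\pi$-partial characters. Each of (2) and (3) follows at once from (1) and the corresponding ordinary statement: write $\theta=\vartheta^0$ with $\vartheta\in\irr{N}$, apply the ordinary identity to $\vartheta$, and restrict to $\pi$-elements.

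For part~(a), fix $\varphi\in\ipi{G}$ and a constituent $\theta$ of $\varphi_N$, and put $T=G_\theta$. Because $\ipi{N}$ is a basis of the space of class functions on $N^0$ (Theorem~\ref{ipi}), $\varphi_N$ has a well-defined multiset of irreducible $\pi$-partial constituents, and this multiset is $G$-invariant since $\varphi$ is. One must show it is a single $G$-orbit with constant multiplicity; the constant multiplicity is then forced by transitivity of the $G$-action, and the single-orbit claim is extracted, exactly as in the ordinary case, by combining the formula for $(\theta^G)_N$ with a Frobenius-type reciprocity for $\pi$-partial characters: $\theta$ is a constituent of $\varphi_N$ precisely when $\varphi$ is a constituent of $\theta^G$. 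I expect this reciprocity to be the hard part. It is genuinely deeper than its ordinary analogue, because $\vartheta^0$ need not be irreducible for $\vartheta\in\irr{N}$, so it cannot simply be transported from the ordinary setting; following Isaacs one would prove it by induction on $|G|$, using the uniqueness of the basis expansion together with a reduction along a chief series of $G$ above $N$, handling $\pi$- and $\pi'$-chief factors separately.

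Granting~(a) and Mackey's formula, part~(b) goes essentially as in the classical proof of the Clifford correspondence. For $\xi\in\ipi{T|\theta}$, Mackey gives $(\xi^G)_T=\xi+\Delta$, where $\Delta$ is a $\pi$-partial character of $T$ no irreducible constituent of which lies over $\theta$ (the non-trivial double-coset terms lie over $T$-conjugates of $\theta^g$ with $g\notin T$, which differ from $\theta$). From this one reads off: (i) $\xi^G\in\ipi{G}$, since any irreducible constituent $\mu$ of $\xi^G$ has $\mu_N$ contained in $(\xi^G)_N$, which by~(a) is a multiple of the full $G$-orbit of $\theta$, so $\mu$ lies over $\theta$, whereupon comparing $T$-restrictions with $\xi+\Delta$ and invoking uniqueness of basis coefficients leaves room for exactly one such $\mu$, occurring with multiplicity one; (ii) injectivity of $\xi\mapsto\xi^G$: if $\xi_1^G=\xi_2^G$, then restricting to $T$ gives $\xi_1+\Delta_1=\xi_2+\Delta_2$, and uniqueness of basis coefficients forces $\xi_1=\xi_2$; and (iii) surjectivity, because any $\varphi\in\ipi{G|\theta}$ has, by~(a) applied to $N\nor T$, an irreducible constituent of $\varphi_T$ lying over $\theta$, and the reciprocity of part~(a) identifies the induction of that constituent with $\varphi$. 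So, once again, the only non-formal ingredient beyond~(a) is the reciprocity between restriction to $N$ and induction from $N$.
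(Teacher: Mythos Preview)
Your proposal is correct and matches the paper's approach: the paper's entire proof is the citation ``See Corollary~5.7 and Theorem~5.11 of \cite{I2}'', which is exactly what your first sentence anticipates. The reconstruction you sketch afterward is reasonable supplementary material (and you are right that the Frobenius-type reciprocity is the genuinely nontrivial ingredient), but the paper itself does not attempt any such reconstruction.
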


\begin{proof}
See Corollary~5.7 and Theorem~5.11 of \cite{I2}. 
\end{proof}

In part (b) of Theorem \ref{ipicliff}, if $\mu^G=\varphi$, where $\mu \in \ipi{T|\theta}$,
then $\mu$ is
called the {\bf Clifford correspondent} of $\varphi$ over $\theta$, and sometimes
it is written $\mu=\varphi_\theta$.

It is not a triviality to define vertices for $\pi$-partial characters
(a concept that in classical modular representation theory has little to do
with character theory). This was first accomplished in \cite{IN} (generalizing a result of Huppert on Brauer characters of $p$-solvable groups).

\begin{thm}\label{vertices}
Suppose that $G$ is $\pi$-separable, and let $\varphi \in \ipi G$.
Then there exist a subgroup $U$ of $G$ and $\alpha \in \ipi U$
of $\pi$-degree such that $\alpha^G=\varphi$. Furthermore,
if $Q$ is a Hall $\pi$-complement of $U$, then the $G$-conjugacy
class of $Q$ is uniquely determined by $\varphi$.
\end{thm}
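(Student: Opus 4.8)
The plan is to deduce the existence of $(U,\alpha)$ from a minimality argument that reduces matters to a statement $(\ast)$ about ``primitive'' $\pi$-partial characters, and to prove the uniqueness of the conjugacy class of $Q$ by induction on $|G|$.

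\textbf{Existence.} Among the subgroups $U \le G$ carrying some $\beta \in \ipi U$ with $\beta^G = \varphi$ --- there is at least one, namely $U = G$ with $\beta = \varphi$ --- choose one of least order. Induction of $\pi$-partial characters is by definition the restriction to $U^0$ of ordinary induction, hence transitive and additive, so that $\varphi$ being irreducible forces $\beta$ to be irreducible, and the minimality of $|U|$ forces $\beta$ to be \emph{primitive}, i.e.\ $\beta \ne \rho^U$ for every $\rho \in \ipi V$ with $V < U$. It then suffices to prove the claim $(\ast)$: \emph{every primitive $\beta \in \ipi U$ has $\pi$-degree}, i.e.\ $\beta(1)$ is a $\pi$-number. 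Granting $(\ast)$ we put $\alpha := \beta$, and any Hall $\pi$-complement $Q$ of $U$ is then as required. To prove $(\ast)$, induct on $|U|$: if $U$ is a $\pi$-group there is nothing to do; otherwise, if $\beta_N$ is non-homogeneous for some $N \nor U$, then the Clifford correspondence for $\pi$-partial characters (Theorem~\ref{ipicliff}(b)) exhibits $\beta = \psi^U$ with $\psi \in \ipi{U_\theta|\theta}$ for a non-$U$-invariant constituent $\theta$ of $\beta_N$, and $U_\theta < U$ contradicts primitivity; so $\beta_N$ is homogeneous for every $N \nor U$. At this stage the argument must pass beyond elementary Clifford theory: one takes the canonical $B_\pi$-lift $\chi \in \irr U$ of $\beta$ and the nucleus $(W,\gamma)$ of $\chi$, with $\gamma \in \irr W$ being $\pi$-special, and uses the relation between $\gamma$ and $\beta$ together with primitivity to force $W = U$; then $\beta(1) = \gamma(1)$ is a $\pi$-number because $\pi$-special characters have $\pi$-degree. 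I expect this last deduction to be the main obstacle --- it is exactly the point at which Isaacs' structure theory of characters of $\pi$-separable groups (rather than formal manipulation) is required, and it is the $\pi$-analogue of the corresponding step in Huppert's determination of the vertices of irreducible Brauer characters of $p$-solvable groups.

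\textbf{Uniqueness.} Suppose $\alpha_i \in \ipi{U_i}$ has $\pi$-degree with $\alpha_i^G = \varphi$, and let $Q_i$ be a Hall $\pi$-complement of $U_i$, for $i = 1,2$; the goal is that $Q_1$ and $Q_2$ are conjugate in $G$. We induct on $|G|$. If $\varphi$ is primitive then no $U_i$ is proper (otherwise $\varphi = \alpha_i^G$ would be induced from $U_i < G$), so $U_1 = U_2 = G$, and $Q_1$, $Q_2$ are Hall $\pi$-complements of $G$, hence conjugate. Otherwise fix $N \nor G$ with $\varphi_N$ non-homogeneous; by Theorem~\ref{ipicliff} the constituents of $\varphi_N$ form a single $G$-orbit, and over a chosen $\theta$ in it $\varphi$ has a unique Clifford correspondent $\psi \in \ipi{G_\theta|\theta}$ with $\psi^G = \varphi$ and $G_\theta < G$. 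Since $(\alpha_i^{G_\theta})^G = \varphi$ is irreducible, $\alpha_i^{G_\theta}$ is irreducible; one then shows that, after replacing $\alpha_i$ by a suitable $G$-conjugate, $U_i \le G_\theta$ and $\alpha_i^{G_\theta} = \psi$, whereupon the inductive hypothesis applied inside $G_\theta$ yields that $Q_1$ and $Q_2$ are conjugate in $G_\theta$, hence in $G$. The delicate step is this last normalization: to see that both factorizations of $\varphi$ can be routed through the same $N$ and the same correspondent $\psi$ one needs a Mackey-type compatibility of induction with restriction to $N$ for $\pi$-partial characters, together with the uniqueness assertion in Theorem~\ref{ipicliff}(b).
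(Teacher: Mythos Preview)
The paper does not give its own proof of this statement: its entire proof is the one-line citation ``This is Theorem~5.17 of \cite{I2}.'' So there is nothing internal to compare against; your sketch should be measured against the argument in Isaacs' book (which in turn follows \cite{IN}). Your existence argument is essentially that argument: take a minimal inducing pair, observe that minimality forces primitivity, and then invoke the $B_\pi$/$\pi$-special machinery (the nucleus) to see that a primitive $\pi$-partial character has $\pi$-degree. That is correct, and you are right that the last step is exactly where the nontrivial structure theory enters.

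Your uniqueness outline has the right shape but contains a genuine gap. The dichotomy you set up --- ``$\varphi$ primitive'' versus ``there is $N\nor G$ with $\varphi_N$ inhomogeneous'' --- is not a dichotomy: imprimitivity does not by itself supply a normal subgroup on which the restriction breaks up. The workable split is ``$\varphi$ has $\pi$-degree'' versus ``$\varphi_N$ is inhomogeneous for some $N\nor G$''. In the first alternative every $|G:U_i|$ divides $\varphi(1)$ and is hence a $\pi$-number, so each $Q_i$ is already a Hall $\pi$-complement of $G$ and conjugacy is immediate; for the second alternative to be available you need the slightly stronger assertion that \emph{quasi}-primitive $\pi$-partial characters have $\pi$-degree, which the nucleus argument you described does in fact yield. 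Separately, your sentence asserting that $\alpha_i^{G_\theta}$ is irreducible is written before you have placed $U_i$ inside $G_\theta$; the Mackey-type conjugation into $G_\theta$ has to come first, and only then can you induce up to $G_\theta$ and match with $\psi$. You flag this as the delicate step, and it is, but as written the order of the argument is circular.
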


\begin{proof}
This is Theorem  5.17 of \cite{I2}. 
\end{proof}

The uniquely defined $G$-class of $\pi'$-subgroups $Q$
associated to $\varphi$ by Theorem \ref{vertices} is 
called the set of {\bf vertices} of $\varphi$.
If $Q$ is a $\pi'$-subgroup of $G$, then we write
$\ipi{G|Q}$ to denote the set of $\varphi \in \ipi G$ which have $Q$ as a vertex.
By definition, notice in this case that
$$\varphi(1)_{\pi'}=|G:Q|_{\pi'}\, .$$

Our last important ingredient is the Glauberman correspondence.

\begin{thm}[Glauberman]
Let $S$ be a finite solvable group acting via automorphisms on a finite group $G$ such that $(|S|,|G|)=1$. Then there exists a canonical bijection, called the \textbf{$S$-Glauberman correspondence}, 
\[\Irr_S(G)\to\irr C, \qquad\chi\mapsto\chi^*,\] 
where $\Irr_S(G)$ is the set of $S$-invariant irreducible characters of $G$ and $C=\cent GS$. Here, $\chi^*$ is a constituent of the restriction $\chi_C$.
Also, if $T \nor S$, then the $T$-Glauberman correspondence is an isomorphism of $S$-sets.  
\end{thm}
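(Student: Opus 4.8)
The plan is to reduce to the case where $S$ has prime order and then, in that base case, to produce the correspondence by singling out for each invariant character a distinguished constituent of its restriction to $C$ modulo that prime. For the reduction I argue by induction on $|S|$. If $|S|$ is not a prime then, since a solvable simple group is cyclic of prime order, $S$ has a normal subgroup $N$ with $1<N<S$; both $|N|<|S|$ and $|S/N|<|S|$ still satisfy the coprimality hypothesis, because $|\cent GN|$ divides $|G|$, so $(|N|,|G|)=1$ and $(|S/N|,|\cent GN|)=1$. By induction the $N$-Glauberman correspondence $\Irr_N(G)\to\Irr(\cent GN)$ is available; being natural with respect to the automorphisms of $G$ induced by $S$, it restricts to a bijection $\Irr_S(G)\to\Irr_{S/N}(\cent GN)$, which I compose with the $(S/N)$-Glauberman correspondence $\Irr_{S/N}(\cent GN)\to\Irr(\cent{\cent GN}{S/N})=\Irr(C)$, using that $\cent{\cent GN}{S/N}=\cent GS=C$. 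The composite involves no arbitrary choice and its value $\chi^{\ast}$ is a constituent of $(\chi_{\cent GN})_C=\chi_C$, so it has the required property; this also shows the construction does not depend on the chosen $N$.

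For the base case $S=\langle s\rangle$ of prime order $q$ with $(q,|G|)=1$, the two sets have the same size: $s$ fixes equally many irreducible characters of $G$ and conjugacy classes of $G$ by Brauer's permutation lemma, and a standard counting lemma for coprime actions matches the $s$-invariant classes of $G$ with the classes of $C=\cent Gs$. The real content is to pin down a \emph{canonical} bijection. I would form the semidirect product $\Gamma=G\rtimes\langle s\rangle$; since $\langle s\rangle$ is cyclic of order coprime to $|G|$, every $\chi\in\Irr_S(G)$ extends to $\Gamma$, and $C\times\langle s\rangle$ is a subgroup of $\Gamma$ because $s$ centralises $C$. Restricting an extension $\hat\chi$ of $\chi$ to $C\times\langle s\rangle$ and analysing the values of $\hat\chi$ on the coset $Gs$ modulo $q$ — using the orthogonality relations on $\Gamma$ and the invertibility of $|G|$ modulo $q$ — one shows that $\chi_C$ has exactly one irreducible constituent $\chi^{\ast}$ whose multiplicity is prime to $q$. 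Setting $\chi\mapsto\chi^{\ast}$ then gives a well-defined map, which an analogous counting (or spanning) argument shows is injective, hence a bijection by the cardinality count; and $\chi^{\ast}$ is a constituent of $\chi_C$ by construction.

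The $S$-set compatibility for $T\triangleleft S$ is then formal: $S$ normalises $T$ and $\cent GT$, so it acts on $\Irr_T(G)$ and on $\Irr(\cent GT)$, and each $x\in S$ acts as an automorphism of $G$ fixing $T$, hence carries the whole construction ($G\rtimes T$, the extensions, the distinguished constituent) to itself; therefore $(\chi^x)^{\ast}=(\chi^{\ast})^x$, and uniqueness of the distinguished constituent is exactly what makes this automatic. I expect the main obstacle to be the prime-order step — the mod-$q$ uniqueness of the distinguished constituent of $\chi_C$, which is the heart of Glauberman's original argument and rests on a careful study of class functions on the coset $Gs$; for the paper itself one would of course simply cite this classical result (Glauberman's theorem, or its treatment in Isaacs' book).
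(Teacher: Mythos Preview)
Your final sentence anticipates exactly what the paper does: its entire proof is the one-line citation ``See Theorem~13.1 of \cite{I1}.'' No argument is given in the paper itself.

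What you have sketched is essentially the classical proof that this citation points to: induction on $|S|$ to reduce to $S$ of prime order via a proper normal subgroup, and in the prime case the Glauberman argument isolating the unique constituent of $\chi_C$ whose multiplicity is not divisible by $q$. The equivariance statement for $T\triangleleft S$ follows, as you say, from the naturality of the construction. One minor point: your remark that the composite ``does not depend on the chosen $N$'' is true but not quite as automatic as you make it sound; it needs a short check (or, as in Isaacs' treatment, one shows that iterating the prime-order correspondence along any chief series of $S$ gives the same answer). Otherwise the outline is correct and matches the reference the paper invokes.
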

\begin{proof}
See Theorem~13.1 of \cite{I1}. 
\end{proof}

\section{Preliminaries}
If $G$ is a finite group, $\pi$ is a set of primes,
and $\chi \in \irr G$, then we say that
$\chi$ has {\bf $\pi$-defect zero} if $\chi(1)_{\pi}=|G|_{\pi}$.

\begin{lem}\label{pirad}
If $\chi \in \irr G$ has $\pi$-defect zero, then $\oh{\pi} G=1$.
\end{lem}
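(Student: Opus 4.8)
The plan is to assume $N:=\oh{\pi}G\neq 1$ and derive a contradiction. The one non-formal ingredient is the classical vanishing property of defect-zero characters: if $\psi\in\irr H$ satisfies $\psi(1)_p=|H|_p$, then $\psi(x)=0$ for every $x\in H$ of order divisible by $p$ (this is standard; it follows from the fact that $|K|\,\psi(x)/\psi(1)$ is an algebraic integer for every class $K$ of $H$ with representative $x$, noting that $p$ divides $|\cent H x|$ when $p\mid o(x)$). Since our hypothesis $\chi(1)_\pi=|G|_\pi$ forces $\chi(1)_p=|G|_p$ for \emph{every} $p\in\pi$, the character $\chi$ vanishes on every $p$-singular element of $G$, for each $p\in\pi$.

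Now I would restrict attention to $N$. Every non-identity element $n\in N$ has order a $\pi$-number greater than $1$, hence is divisible by some $p\in\pi$ and is therefore $p$-singular; by the previous paragraph $\chi(n)=0$. Thus $\chi$ vanishes on $N\setminus\{1\}$, and comparing values on $N$ this means exactly that $\chi_N=\frac{\chi(1)}{|N|}\rho_N$, where $\rho_N$ is the regular character of $N$; equivalently, $\la\chi_N,\lambda\ra=\chi(1)\lambda(1)/|N|>0$ for every $\lambda\in\irr N$. In particular the principal character $1_N$ is an irreducible constituent of $\chi_N$.

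Finally I would invoke Clifford's theorem: the irreducible constituents of $\chi_N$ form a single $G$-orbit inside $\irr N$. Since $1_N$ lies in that orbit and is fixed by $G$, the orbit is $\{1_N\}$, so $\irr N=\{1_N\}$ and therefore $N=1$, contrary to assumption. This proves $\oh{\pi}G=1$. I do not expect any real obstacle here: the argument is a short manipulation once the defect-zero vanishing statement is applied one prime at a time, and Clifford's theorem for ordinary characters supplies the rest.
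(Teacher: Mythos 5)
Your argument is correct, but it takes a genuinely different route from the paper's. The paper argues purely with degree divisibility: if $N\nor G$ and $\theta\in\irr N$ lies under $\chi$, then $\theta(1)$ divides $|N|$ and $\chi(1)/\theta(1)$ divides $|G:N|$ (Isaacs, Corollary 11.29), so $\pi$-defect zero forces $\theta(1)_\pi=|N|_\pi$; taking $N=\oh{\pi}G$, a $\pi$-group, this gives $\theta(1)=|N|$, which is impossible unless $N=1$ because $\theta(1)^2\le |N|$. You instead invoke the classical vanishing theorem (Isaacs, Theorem 8.17; Brauer--Nesbitt): a character of $p$-defect zero vanishes on all $p$-singular elements. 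Applying it for each $p\in\pi$, you get that $\chi$ vanishes on $\oh{\pi}G\setminus\{1\}$, conclude that every irreducible character of $\oh{\pi}G$, in particular the trivial one, is a constituent of the restriction, and then Clifford's theorem forces $\oh{\pi}G=1$. This works and is somewhat more conceptual, but it rests on a heavier classical ingredient than the paper needs. One caveat: your parenthetical justification of the vanishing theorem is not actually a proof of it — the integrality of $|K|\chi(x)/\chi(1)$ together with $p$ dividing $|\cent Gx|$ only shows that $\chi(x)$ lies in every prime ideal above $p$, i.e.\ $\chi(x)\equiv 0$ modulo such primes, not that $\chi(x)=0$; the genuine proof requires an additional Galois/valuation argument over the roots of unity involved. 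Since you cite the theorem as a known standard result, your proof stands, but the sketch in parentheses should not be presented as the reason it holds.
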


\begin{proof}
Let $N \nor G$, and let $\theta \in \irr N$ be under $\chi$.
Then we have that $\theta(1)$ divides $|N|$ and $\chi(1)/\theta(1)$
divides $|G:N|$ by Corollary 11.29 of \cite{I1}.
Thus $\chi(1)_\pi=|G|_\pi$ if and only if $\theta(1)_\pi=|N|_\pi$ and 
$(\chi(1)/\theta(1))_\pi=|G:N|_\pi$. The result is now clear
applying this to $N=\oh \pi G$.
\end{proof}

We shall use the following notation.
Suppose $G$ is $\pi$-separable, 
 $N \nor G$, $\tau \in \ipi N$ and $Q$ is a $\pi'$-subgroup of $G$.
 Then
 $$\ipi{G|Q,\tau}=\ipi{G|Q} \cap \ipi{G|\tau} \, .$$

\begin{lem}\label{gvert}
Suppose $G$ is $\pi$-separable
and that $N \nor G$. Let $Q$ be a $\pi'$-subgroup of $G$.
\begin{enumerate}[(a)]
\item
Suppose that $\mu \in \ipi{G|Q}$. Then there is a unique
$\norm GQ$-orbit of  $\tau \in \ipi N$ such that
$\mu_\tau \in \ipi{G_\tau|Q}$, where $\mu_\tau$ is the Clifford
correspondent of $\mu$ over $\tau$. Every such $\tau$ is
$Q$-invariant.

\item
Suppose that $\tau \in \ipi N$ is $Q$-invariant.
Let $\c U$ be a complete set of representatives
of the $G_\tau$-orbits on the set $\{Q^g \, |\,   g \in G, Q^g \sbs G_\tau\}$.
Then 
$$|\ipi{G|Q,\tau}|=\sum_{U \in {\mathcal U}} |\ipi{G_\tau|U,\tau}|\, .$$
Thus, if $G_\tau\norm GQ=G$, then  
$$|\ipi{G|Q,\tau}|=|\ipi{G_\tau|Q,\tau}|\, .$$
\end{enumerate}
\end{lem}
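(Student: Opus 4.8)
The plan is to deduce (b) from Clifford theory together with the elementary observation that induction of irreducible $\pi$-partial characters preserves vertices, and to prove (a) by a short argument for uniqueness and an induction on the group order for existence; the existence statement is the only serious point.

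\medskip\noindent\textbf{Proof of (b).} First record the auxiliary observation: if $H\sbs G$, if $\psi\in\ipi H$ has vertex $V$, and if $\psi^{G}$ is irreducible, then $\psi^{G}\in\ipi{G|V}$. Indeed, by Theorem~\ref{vertices} there are $W\sbs H$ and $\alpha\in\ipi W$ of $\pi$-degree with $\alpha^{H}=\psi$ and $V$ a Hall $\pi$-complement of $W$; then $\alpha^{G}=\psi^{G}$, and the uniqueness part of Theorem~\ref{vertices} identifies the $G$-class of $V$ as the vertex of $\psi^{G}$. Now, since $\tau$ is $Q$-invariant we have $N\sbs G_{\tau}$, and $\mu\mapsto\mu_{\tau}$ is a bijection $\ipi{G|\tau}\to\ipi{G_{\tau}|\tau}$ with $(\mu_{\tau})^{G}=\mu$, by Theorem~\ref{ipicliff}(b). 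By the auxiliary observation the vertex of $\mu$ is the $G$-class of the vertex of $\mu_{\tau}$; as the latter is a $\pi'$-subgroup of $G_{\tau}$, we get $\mu\in\ipi{G|Q}$ exactly when the vertex of $\mu_{\tau}$ is $G_{\tau}$-conjugate to one (necessarily unique) $U\in\mathcal U$. Hence the Clifford correspondence restricts to a bijection
$$\ipi{G|Q,\tau}\ \longrightarrow\ \bigcup_{U\in\mathcal U}\ipi{G_{\tau}|U,\tau}$$
with the union disjoint, and the first displayed identity follows. If in addition $G=\norm GQ\,G_{\tau}$, then any $g\in G$ with $Q^{g}\sbs G_{\tau}$ can be written $g=cx$ with $c\in\norm GQ$ and $x\in G_{\tau}$, whence $Q^{g}=Q^{x}$ is $G_{\tau}$-conjugate to $Q$; so $\mathcal U=\{Q\}$ and $|\ipi{G|Q,\tau}|=|\ipi{G_{\tau}|Q,\tau}|$.

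\medskip\noindent\textbf{Proof of (a), uniqueness.} If $\mu_{\tau}\in\ipi{G_{\tau}|Q}$ then $Q$ is a vertex of $\mu_{\tau}$ in $G_{\tau}$, so $Q\sbs G_{\tau}$ and $\tau$ is $Q$-invariant; this is the last assertion of (a). Suppose now $\tau_{1},\tau_{2}\in\ipi N$ both lie under $\mu$ with $\mu_{\tau_{i}}\in\ipi{G_{\tau_{i}}|Q}$. By Theorem~\ref{ipicliff}(a) we have $\tau_{2}=\tau_{1}^{\,g}$ for some $g\in G$, so $G_{\tau_{2}}=G_{\tau_{1}}^{\,g}$ and $\mu_{\tau_{2}}=(\mu_{\tau_{1}})^{g}$, which has vertex $Q^{g}$ in $G_{\tau_{2}}$. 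Since $\mu_{\tau_{2}}$ also has vertex $Q$, there is $h\in G_{\tau_{2}}$ with $Q^{g}=Q^{h}$, so $gh^{-1}\in\norm GQ$ and $\tau_{1}^{\,gh^{-1}}=\tau_{2}$; thus $\tau_{1}$ and $\tau_{2}$ lie in one $\norm GQ$-orbit.

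\medskip\noindent\textbf{Proof of (a), existence.} Here I would induct on $|G|$. By Theorem~\ref{vertices}, write $\mu=\alpha^{G}$ with $\alpha\in\ipi U$ of $\pi$-degree and $Q$ a Hall $\pi$-complement of $U$. If $NU<G$, then $\beta:=\alpha^{NU}$ is irreducible (since $\mu=\beta^{G}$ is) and lies in $\ipi{NU|Q}$; applying the inductive hypothesis to $(NU,N,Q,\beta)$ and then lifting the resulting $\tau$ to $G$ — which uses a compatibility of the Clifford correspondents over $N$ formed inside $NU$ and inside $G$, together with the auxiliary observation from (b) — reduces us to the case $G=NU$. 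In that case Mackey's formula gives $\mu_{N}=(\alpha_{U\cap N})^{N}$. Since $\alpha$ is $U$-invariant, the constituents of $\alpha_{U\cap N}\in\ipi{U\cap N}$ form a single $U$-orbit, of length dividing $\alpha(1)$ and hence a $\pi$-number; so the inertia group in $U$ of some constituent $\rho$ has index a $\pi$-number and therefore contains a Hall $\pi$-complement of $U$. As all Hall $\pi$-complements of $U$ are $U$-conjugate, after replacing $\rho$ by a $U$-conjugate we may assume that $Q$ fixes $\rho$. It then remains to pass from this $Q$-invariant $\rho\in\ipi{U\cap N}$ to a $Q$-invariant constituent $\tau$ of $\rho^{N}$, which is automatically a constituent of $\mu_{N}$, and to verify that $\mu_{\tau}\in\ipi{G_{\tau}|Q}$; both require further coprime-action bookkeeping and repeated appeals to Theorem~\ref{vertices}. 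I expect the genuine difficulty to be exactly this last point: arranging that the $\pi'$-subgroup obtained at each stage is \emph{literally} $Q$, and not merely some $G$-conjugate of it — which is where the uniqueness clause of Theorem~\ref{vertices}, applied inside the various stabiliser subgroups, must be invoked.
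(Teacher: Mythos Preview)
Your proof of (b) and the uniqueness half of (a) are correct and essentially identical to the paper's. The issue is the existence half of (a), where you embark on an induction on $|G|$, a reduction to $G=NU$, a Mackey computation, and an unfinished coprime-action argument---and you yourself flag the last step as the ``genuine difficulty''. In fact there is no difficulty at all: existence follows in three lines from the very observation you already recorded at the start of (b).

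Here is the paper's argument. Pick \emph{any} $\nu\in\ipi N$ under $\mu$, and let $R$ be \emph{any} vertex of $\mu_\nu\in\ipi{G_\nu|\nu}$. Since $(\mu_\nu)^G=\mu$, your auxiliary observation (induction preserves vertices) gives that $R$ is a vertex of $\mu$, hence $R=Q^g$ for some $g\in G$. Now set $\tau:=\nu^{g^{-1}}$. Then $G_\tau=(G_\nu)^{g^{-1}}$ and $\mu_\tau=(\mu_\nu)^{g^{-1}}$, which has vertex $R^{g^{-1}}=Q$. That is the whole proof.

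The point you were worried about---``arranging that the $\pi'$-subgroup obtained is \emph{literally} $Q$ and not merely a $G$-conjugate''---is handled by exactly this conjugation trick: you do not try to build $\tau$ so that its Clifford correspondent already has vertex $Q$; instead you take whatever vertex you get and conjugate $\nu$ to move that vertex onto $Q$. Your inductive route, besides being incomplete (the passage from a $Q$-invariant $\rho\in\ipi{U\cap N}$ to a $Q$-invariant constituent of $\rho^N$ is genuinely nontrivial, and the compatibility of Clifford correspondents you invoke in the $NU<G$ step also needs justification), is simply unnecessary.
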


\begin{proof}\begin{enumerate}[(a)]
\item Let $\nu \in \ipi N$ be under $\mu$, and let $\mu_\nu \in \ipi{G_\nu|\nu}$
be the Clifford correspondent of $\mu$ over $\nu$. 
If $R$ is a vertex of $\mu_\nu$, then $R$ is a vertex of $\mu$, by Theorem~\ref{vertices}.
Therefore $R=Q^g$ for some $g \in G$. If $\tau=\nu^{g^{-1}}$, then we have that
$\mu_\tau$ has vertex $Q$. Suppose now that
 $\rho \in \ipi N$ is under $\mu$ such that
$\mu_\rho$ has vertex $Q$. By Theorem~\ref{ipicliff}(a), there exists $g \in G$ such that $\tau^g=\rho$. Thus $Q^g$ is a vertex of $\mu_\rho$.
Then there is $x \in G_\rho$ such that $Q^{gx}=Q$. Since $\tau^{gx}=\rho$,
the proof of part (a) is complete.

\item We have that induction defines a bijection $\ipi{G_\tau|\tau} \rightarrow \ipi{G|\tau}$.
Notice that 
$$\bigcup_{U \in {\c U}} \ipi{G_\tau|U}$$
is a disjoint union.
It suffices to observe, again,  that if $\xi \in \ipi{G_\tau|\tau}$ has vertex $U$, then
$\xi^G$ has vertex $U$.\qedhere 
\end{enumerate}
\end{proof}

\section{Proofs}
The deep part in our proofs comes from the following result.

\begin{thm}\label{deep}
Suppose that $L$ is
a normal $\pi$-subgroup of $G$, $Q$ is
a solvable $\pi'$-subgroup of $G$ such that $LQ \nor G$.
Suppose that $M \sbs \zent G$ is contained in $L$
and that $\varphi \in \irr M$. 
Then $|\ipi{G|Q, \varphi}|=|\ipi{\norm GQ|Q, \varphi}|$.
\end{thm}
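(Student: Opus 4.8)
The plan is to reduce to the situation already handled by the Dade–Puig machinery by a careful Clifford-theoretic descent, using the normal subgroup $LQ$ and the central subgroup $M$ as the scaffolding. First I would look at $\varphi \in \irr M$ and note that since $M \subseteq \zent G$, every irreducible $\pi$-partial character lying over $\varphi$ has $M$ in its kernel up to the linear twist by $\varphi$, so one may pass to characters of $G$ lying over the $G$-invariant character $\varphi$. The essential object is $LQ \nor G$: I would study the $G$-action on $\ipi{LQ \mid Q, \varphi}$, i.e.\ the irreducible $\pi$-partial characters of $LQ$ with vertex $Q$ lying over $\varphi$. Because $L$ is a $\pi$-group and $Q$ a $\pi'$-group with $LQ \nor G$, the group $LQ$ has $L = \oh{\pi}{LQ}$ and $Q$ a Hall $\pi'$-complement, and here Isaacs' $\pi$-theory for the normal subgroup should let one identify $\ipi{LQ \mid Q, \varphi}$ with $\Irr$ of a suitable section — in fact with characters of $Q$-invariant type, where the Glauberman correspondence for the coprime action of $Q$ on $L$ enters.

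Next I would run Clifford theory for the normal subgroup $LQ \nor G$ on both sides of the claimed equality. On the left, each $\mu \in \ipi{G \mid Q, \varphi}$ lies over some $\theta \in \ipi{LQ \mid Q, \varphi}$; by Lemma~\ref{gvert}(a) the relevant $\theta$ are $Q$-invariant and form a single $\norm GQ$-orbit, and $\mu \mapsto \mu_\theta$ sends $\ipi{G \mid Q, \varphi}$ bijectively (orbit by orbit) to Clifford correspondents in $\ipi{G_\theta \mid Q, \varphi}$. The stabilizer calculation should give $G_\theta \norm GQ = G$ in the relevant cases (the vertex $Q$ being common forces the orbit of $\theta$ to be controlled by $\norm GQ$), so that Lemma~\ref{gvert}(b) yields $|\ipi{G \mid Q, \varphi}| = |\ipi{G_\theta \mid Q, \varphi}|$. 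On the right, $\norm GQ$ normalizes $LQ \cap \norm GQ = (L \cap \norm GQ)Q = \cent LQ \cdot Q$, and running the same Clifford argument over the $Q$-invariant $\theta$ — now using the Glauberman correspondent $\theta^*$ of $\theta$ under the $Q$-action, which lies over $\varphi$ since $M = \cent MQ \subseteq \cent LQ$ — reduces $|\ipi{\norm GQ \mid Q, \varphi}|$ to a Clifford correspondent count over $\theta^*$. The point is then to match the two reduced counts via the Glauberman correspondence $\theta \leftrightarrow \theta^*$, which by the last sentence of the Glauberman theorem is an isomorphism of $\norm GQ$-sets, so that $G_\theta \cap \norm GQ = (\norm GQ)_{\theta^*}$.

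After these reductions both sides become instances of the same counting problem over a $Q$-invariant $\theta$ (resp.\ $\theta^*$), but now over a group in which $L$ has been effectively replaced by its $\cent LQ$ part and where the deep input — the Dade–Puig theorem on extendibility of Glauberman correspondents, equivalently the counting equality $|\ipi{X \mid Q, \theta}| = |\ipi{\norm XQ \mid Q, \theta^*}|$ in the presence of a normal $\pi$-subgroup on which $Q$ acts — applies directly. So the final step is to invoke that theorem to equate the two reduced cardinalities and read back the chain of bijections.

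The hard part will be the bookkeeping in the Clifford reduction: verifying that the condition $G_\theta \norm GQ = G$ (so that Lemma~\ref{gvert}(b) gives an equality rather than a sum over several classes $U$) really does hold here, and simultaneously that the Glauberman correspondence $\theta \mapsto \theta^*$ intertwines the $G$-action on the $LQ$-side with the $\norm GQ$-action on the $\cent LQ \cdot Q$-side so that the stabilizers match up. Getting the vertices to transport correctly through the Glauberman correspondence — the claim that the Glauberman correspondent of a $\pi$-partial character of vertex $Q$ again \emph{behaves as if} it had vertex $Q$ in the normalizer — is the technically delicate point, and is exactly where Dade–Puig (rather than the softer Okuyama–Wajima argument of~\cite{IN}) is unavoidable once $Q$ is merely solvable rather than nilpotent.
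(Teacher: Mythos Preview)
Your outline is essentially the paper's argument: a Clifford reduction, the Glauberman correspondence for the coprime $Q$-action on $L$, and then Dade--Puig in the form of a character-triple isomorphism. The one point to straighten out is your choice to do Clifford over $LQ$ rather than over $L$. Over $LQ$ the ``$Q$-invariance'' coming out of Lemma~\ref{gvert}(a) is vacuous (since $Q \subseteq LQ$ acts by inner automorphisms), and your phrase ``the Glauberman correspondent $\theta^*$ of $\theta$'' is not meaningful for $\theta \in \ipi{LQ}$: the Glauberman map goes from $\Irr_Q(L)$ to $\irr{\cent LQ}$, not from characters of $LQ$ to anything. The paper instead takes a set $\mathcal A$ of $\norm GQ$-orbit representatives on $\Irr_Q(L|\varphi)$, writes both sides as disjoint unions indexed by $\mathcal A$ (respectively by the Glauberman correspondents $\mathcal A^*$), uses $LQ \subseteq G_\tau$ to get $G_\tau\,\norm GQ = G$ so that Lemma~\ref{gvert}(b) gives an equality rather than a sum, and reduces to $\tau$ being $G$-invariant. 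At that stage the vertex condition becomes the pure degree condition $\theta(1)_{\pi'}=|G:Q|_{\pi'}$ (via Lemma~6.30 of \cite{I2}), and the Dade--Puig character-triple isomorphism $(G,L,\tau)\cong(\norm GQ,\cent LQ,\tau^*)$, which preserves $\chi(1)/\tau(1)$ and hence $\chi(1)_{\pi'}$, finishes the proof. Your identification of Dade--Puig as the decisive input, and of the vertex-transport as the delicate step, is exactly right.
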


\medskip

\begin{proof}
Let $\c A$ be a complete set of
representatives of $\norm GQ$-orbits on ${\rm Irr}_Q(L|\varphi)$,
the $Q$-invariant members of $\irr{L|\varphi}$.
Using Lemma \ref{gvert},
we have that  $$\ipi{G|Q, \varphi}=\bigcup_{\tau \in {\c A}} \ipi{G|Q,\tau}$$
is a disjoint union.   Let $\c A^*$ be the set of the $Q$-Glauberman correspondents
 of the elements of $\c A$. Notice that $\c A^*$
 is a complete set of
representatives of $\norm GQ$-orbits on $\irr{\cent LQ|\varphi}$. Moreover, $\norm {G_\tau}Q=\norm {G_{\tau^*}}Q$.
Then, as before,
 $$\ipi{\norm GQ|Q, \varphi}=\bigcup_{\tau \in {\c A}} \ipi{\norm GQ|Q,\tau^*}$$
is a disjoint union.
Thus
 $$|\ipi{\norm GQ|Q,\varphi}|=\sum_{\tau \in {\c A}} |\ipi{\norm {G_\tau}Q|Q, \tau^*}|\,.$$
Thus we need to prove that
 $$|\ipi{G_\tau|Q, \tau}|=|\ipi{\norm {G_\tau}Q|Q, \tau^*}|\,.$$
 We may assume that $\tau$ is $G$-invariant.
 
 Now, since $LQ \nor G$ and $\tau$ is $G$-invariant,
  by Lemma 6.30 of \cite{I2}, we have that $Q$ is contained
  as a normal subgroup in some vertex of $\theta$, whenever $\theta \in \ipi G$
  lies over $\tau$. Therefore 
 $\theta \in \ipi{G|\tau}$ has vertex $Q$ if and only if
  $\theta(1)_{\pi'}=|G:Q|_{\pi'}$. Similarly, $\theta\in\ipi{\norm GQ|\tau^*}$ has vertex $Q$ if any only if $\theta(1)_{\pi'}=|\norm GQ:Q|_{\pi'}=|G:Q|_{\pi'}$ since $G=L\norm GQ$ by the Frattini argument and the Schur--Zassenhaus theorem.
 
 Now we use the Dade--Puig theory on the character theory
 above Glauberman correspondents, which is thoroughly  explained in
 \cite{T}. By  Theorem 6.5 of \cite{T}, in the language of Chapter 11 of \cite{I1}
 (see Definition 11.23 of \cite{I1}), we have
 that the character triples $(G,L,\tau)$ and $(\norm GQ, \cent LQ, \tau^*)$
 are isomorphic.  Write $^*: \irr{G|\tau} \rightarrow \irr{\norm GQ|\tau^*}$
 for the associated bijection of characters.
 By Lemma 6.21 of \cite{I2}, there exists a unique bijection
 $$^{*}: \ipi{G|\tau} \rightarrow \ipi{\norm GQ|\tau^*}$$
 such that if $\chi^0=\phi \in \ipi{G|\tau}$ and $\chi \in \irr G$
 (which necessarily lies over $\tau$), then
 $(\chi^*)^0=\phi^*$. 
 Since $\chi(1)/\tau(1)= \chi^*(1)/\tau^*(1)$
 (by Lemma 11.24 of \cite{I1}), it follows that
 $\chi(1)_{\pi'}=\chi^*(1)_{\pi'}$. 
 We deduce that 
 $$|\ipi{G|Q, \tau}|=|\ipi{\norm {G}Q|Q, \tau^*}|\,,$$
 as desired. 
 \end{proof}

In order to prove Theorem~B, we argue by induction on the index of a normal $\pi$-subgroup $M$ of $G$. Theorem~B follows from the special case $M=1$.

\begin{thm}\label{thm}
Suppose that $G$ is $\pi$-separable with a solvable Hall $\pi$-complement.
Let $R$ be a nilpotent $\pi'$-subgroup 
of $G$. Let $M\nor G$ be a normal
$\pi$-subgroup, and let   $\varphi \in \irr M$ be
$G$-invariant.
Let $\mathcal Q$ be the set of $\pi'$-subgroups $Q$ of $G$
such that $R$
is a Carter subgroup 
of $Q$.
Then  
$$\Bigl|\bigcup_{Q \in {\mathcal Q}}\ipi{G|Q, \varphi}\Bigr|=|\ipi{M\norm GR|R, \varphi}| \,.$$
\end{thm}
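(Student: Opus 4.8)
The plan is to prove this by induction on $|G:M|$ (Theorem~B is the case $M=1$). If $G=M$, then $G$ is a $\pi$-group, so $R=1$, $\mathcal Q=\{1\}$, $\norm GR=G=M\norm GR$, and since $\ipi G=\irr G$ both sides count $\irr{G|\varphi}$; so from now on $M<G$.

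Before the induction proper I would make two standard reductions. First, because $M$ is a $\pi$-group, the usual $\pi$-central extension provides a $\pi$-separable group $\widetilde G$ with a central $\pi$-subgroup $\widetilde M$, an isomorphism $G/M\cong\widetilde G/\widetilde M$, and a $G/M$-isomorphism of character triples $(G,M,\varphi)\cong(\widetilde G,\widetilde M,\widetilde\varphi)$; since $M$ and $\widetilde M$ are $\pi$-groups, every $\pi'$-subgroup of $G$ or of $\widetilde G$ maps isomorphically onto a $\pi'$-subgroup of $G/M$, so $\mathcal Q$, $R$ and $\norm GR$ transport to $\widetilde G$, and so does the desired equality, via Lemma~6.21 of~\cite{I2} and the fact that a character triple isomorphism preserves ratios of character degrees and hence vertices. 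Thus we may assume $M\sbs\zent G$. Second, if $R\nor G$ then $MR\nor G$, and this forces $\mathcal Q=\{R\}$: for $Q\in\mathcal Q$ the subgroup $Q\cap MR$ is a normal $\pi'$-subgroup of $Q$ containing $R$, so it equals $R$ (as $R$ is a Hall $\pi'$-subgroup of $MR$), whence $Q=\norm QR=R$; since moreover $\norm GR=G$ and $M\sbs\norm GR$, both sides equal $|\ipi{G|R,\varphi}|$. So we may also assume $\norm GR<G$.

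For the inductive step I would argue as follows. Note that $\bigcup_{Q\in\mathcal Q}\ipi{G|Q,\varphi}$ is a disjoint union over the $\norm GR$-orbits on $\mathcal Q$, since two $G$-conjugate members of $\mathcal Q$ are already $\norm GR$-conjugate (all Carter subgroups of a solvable group are conjugate). Fix a minimal normal subgroup $N/M$ of $G/M$; by $\pi$-separability $N$ is a normal $\pi$- or $\pi'$-subgroup with $M\subsetneq N$, so $|G:N|<|G:M|$. Imitating the proof of Theorem~\ref{deep}, I would decompose $\bigcup_{Q\in\mathcal Q}\ipi{G|Q,\varphi}$ as a disjoint union over representatives $\tau$ of the $\norm GR$-orbits on the $R$-invariant members of $\irr{N|\varphi}$, and $\ipi{M\norm GR|R,\varphi}$ as the corresponding disjoint union over the $R$-Glauberman correspondents $\tau^{*}\in\irr{\cent NR|\varphi}$, using Lemma~\ref{gvert} to justify both decompositions and to reduce, block by block, to the case where $\tau$ is $G$-invariant (replacing $G$ by $G_\tau$, which contains $\norm GR$ after replacing $\tau$ by a conjugate, so that $\norm{G_\tau}R=\norm GR$). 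With $\tau$ $G$-invariant I would then distinguish: if $N=\oh\pi G$ and $\oh\pi G\cdot R\nor G$, then $\mathcal Q=\{R\}$ by the same argument as above and Theorem~\ref{deep} applies with $L=\oh\pi G$, $Q=R$ and $M\sbs\zent G\cap\oh\pi G$, giving $|\ipi{G|R,\varphi}|=|\ipi{\norm GR|R,\varphi}|=|\ipi{M\norm GR|R,\varphi}|$; otherwise one lowers $|G:M|$ — by absorbing $N$ into $M$ when $N$ is a $\pi$-group (applying the inductive hypothesis to $(G,N,\tau)$, which carries the same $R$ and $\mathcal Q$, and then invoking Theorem~\ref{deep} for the subgroup $N\norm GR$, in which $NR\nor N\norm GR$, to match the local side), or, when $N$ is a $\pi'$-group, by a further Clifford reduction over $\irr N$ (sending non-$G$-invariant orbits into stabilizers via the Clifford correspondence for $\pi$-partial characters and Lemma~\ref{gvert}, and following the $G$-conjugates of $R$ by conjugacy of Carter subgroups) together with passage to a suitable section of $G$ — so that the inductive hypothesis finishes.

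The main obstacle is the bookkeeping in the last step: one must control vertices and the Carter-subgroup family $\mathcal Q$ simultaneously through the Clifford correspondence into stabilizers, through sections by normal $\pi'$-subgroups, and through the Glauberman correspondence, and then verify that the pieces reassemble to exactly $|\ipi{M\norm GR|R,\varphi}|$ — equivalently, one must pin down the precise terminal configuration in which no chief-factor reduction is available and confirm that it is the one handled by Theorem~\ref{deep}.
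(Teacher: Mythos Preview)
Your overall architecture (induction on $|G:M|$, reduction to $M$ central via a character-triple isomorphism, and the use of Theorem~\ref{deep} on $N\norm GR$ when $N$ is a $\pi$-group) matches the paper's. But two essential steps are missing or mis-stated, and they are precisely the ones that make the argument work.

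First, the $\pi'$-case. When the minimal normal $N/M$ is a $\pi'$-group (equivalently, $K=\oh{\pi'}G\ne 1$ after $M$ is central), your ``further Clifford reduction over $\irr N$ \dots\ together with passage to a suitable section'' does not by itself lower $|G:M|$: if the relevant character of $N$ is $G$-invariant you are stuck, and $M$ cannot absorb a $\pi'$-group. The paper instead mods out $K$ directly, but this requires two checks you do not make: that $\norm KR\subseteq R$ (otherwise both sides are zero), and that $R$ is a Carter subgroup of $Q$ if and only if $RK/K$ is a Carter subgroup of $Q/K$ (the ``if'' direction uses $\norm KR\subseteq R$ and is not automatic). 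Without these, there is no way to pass to $G/K$.

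Second, the Clifford bookkeeping in the $\pi$-case is the heart of the proof, and your reduction ``block by block to $\tau$ $G$-invariant, with $\norm{G_\tau}R=\norm GR$'' does not hold: for an $R$-invariant $\tau$ one only has $R\subseteq G_\tau$, and in general $\norm{G_\tau}R\subsetneq\norm GR$. Moreover, Lemma~\ref{gvert}(a) gives a unique $\norm GQ$-orbit of constituents $\tau$ with $\mu_\tau\in\ipi{G_\tau|Q}$, not a unique $\norm GR$-orbit; translating between the two is exactly where the Carter hypothesis enters, via the Frattini-type factorisation $\norm GQ=Q\cdot(\norm GQ\cap\norm GR)$. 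The paper does not reduce to $G$-invariant $\tau$ at all: it applies induction to each $(G_\tau,L,\tau)$ with $L=\oh\pi G$, uses Lemma~\ref{gvert}(b) and Theorem~\ref{deep} to convert the right-hand sides into $|\ipi{\norm GR|R,\varphi}|$, and then builds an explicit bijection $f$ between $\bigcup_{Q\in\mathcal R}\ipi{G|Q,\varphi}$ and $\bigcup_\tau\bigl(\bigcup_{Q\in\mathcal Q_\tau}\ipi{G_\tau|Q,\tau}\bigr)$, checking injectivity via the Carter--Frattini argument above. That bijection is the step you flag as ``the main obstacle'' but do not carry out; the example after the paper's proof shows that na\"\i ve vertex-tracking through Clifford fails, so this construction cannot be skipped.
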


\medskip

\begin{proof}
We argue by induction on $|G:M|$. 

By Lemma 3.11 of \cite{I2}, let $(G^*,M^*,\varphi^*)$ be a character triple isomorphic
to $(G,M,\varphi)$, where $M^*$ is a central
$\pi$-subgroup of $G^*$. 
If $Q$ is a $\pi'$-subgroup of $G$, notice that
we can write  $(QM)^*=M^* \times Q^*$, for a unique
 $\pi'$-subgroup $Q^*$ of $G^*$.
 If $R$ is contained in a $\pi'$-subgroup $Q$, then $R$
 is a Carter subgroup of $Q$ if and only if $RM/M$ is a Carter subgroup
 of $QM/M$, using that $Q$ is naturally isomorphic to $QM/M$.
 This happens if and only if $(RM/M)^*$ is a Carter subgroup
 of $(QM/M)^*$, which again happens if and only if $R^*$ is a Carter
 subgroup of $Q^*$. Notice further that if $R$ is a Carter subgroup of $Q$,
 then $R$ is a Carter subgroup of every Hall $\pi$-complement $Q_1$
 of $QM$ that happens to contain $R$ (again using the isomorphism
 between $QM/M$ and $Q$). We easily check now that
 the set of $\pi'$-subgroups of $G^*$ that contain $R^*$ as a Carter subgroup
 is exactly $\mathcal{Q^*}=\{Q^* | Q \in \mathcal Q\}$.

By the Frattini argument and the Schur--Zassenhaus theorem,
 notice that $\norm G{MR}=M\norm GR$.
 By Lemma 6.21 and the proof of Lemma 6.32 of \cite{I2},
 there is a bijection $^*: \ipi{G|\varphi} \rightarrow \ipi{G^*|\varphi^*}$
 such that $\eta$ has vertex $Q$ if and only if $\eta^*$ has vertex $Q^*$.
From all these arguments, it easily follows 
that we may assume that $M$ is central. In particular, $M\le\norm GR$.

Let $K=\oh{\pi'} G$.
Suppose that there exists
some $\mu \in \ipi{G|Q, \varphi}$ for some $Q \in \mathcal Q$.
By
Lemma 6.30 of \cite{I2} (in the notation of that lemma, $K$ is $1$ and $Q$ is $K$), we have that $K$ is contained in $Q$.
Hence, it is no loss if we only consider $Q \in \mathcal Q$
such that $K \sbs Q$. 

Suppose that $\norm KR$ is not contained in $R$.
Then there cannot be weights $(R, \gamma)$, where
$\gamma \in \irr{\norm GR/R}$ has $\pi$-defect zero by Lemma \ref{pirad}.
So the right hand side is zero. 
Suppose that there exists
some $\mu \in \ipi{G|Q, \varphi}$ for some $Q \in \mathcal Q$ (with $K \sbs Q$).
Since $R$ is a Carter subgroup of $Q$, then $R$ is a Carter subgroup of $KR$, and therefore
$\norm KR$ is contained in $R$. Therefore may assume that 
 $\norm KR$ is   contained in $R$.
 We claim that $R$ is a Carter subgroup of $Q$ if and only if  $RK/K$ is a Carter subgroup of $Q/K$.
 One implication is known (see 9.5.3 in \cite{R}). Suppose that $RK/K$ is a Carter subgroup of $Q/K$.
 Since $\norm QR$ normalizes $RK$, it is contained in $RK$.
 Hence $\norm QR=\norm {KR}R=R$, and $R$ is a Carter subgroup of $Q$.
In this situation the Frattini argument yields $\norm GRK=\norm G{RK}$.

 Next, we will replace $G$ by $G/K$. By Lemma 6.31 of \cite{I2} (the roles of $K$ and $M$ are interchanged in that lemma),
\[|\ipi{G|Q,\varphi}|=|\ipi{G/K|Q/K,\hat\varphi},\]
where $\hat\varphi \in \irr{MK/K}$ corresponds to  $\varphi$ via the natural isomorphism.
Similarly,
\[|\ipi{\norm GR|R,\varphi}|=|\ipi{\norm GRK/K|RK/K,\hat\varphi}|=|\ipi{\norm G{RK}/K|RK/K,\hat\varphi}|.\]
Hence, for the remainder of the proof we may assume that $\oh{\pi'} G=K=1$.

Suppose now that $L=\oh \pi G$. 
Let $\c A$ be a complete set of $\norm GR$-representatives
of the $R$-invariant characters in $\irr{L|\varphi}$.
If $L=M$, then $L=G$ by the Hall-Higman Lemma 1.2.3, and $G$ is a $\pi$-group.
In this case, $R=1=Q$,  and there is nothing to prove.
Thus, we may assume that $|G:L|<|G:M|$.

For each $\tau \in \c A$, let ${\c Q}_\tau$ be
the set of $\pi'$-subgroups $Q$ of $G_\tau$ such that
$R$ is a Carter subgroup of $Q$. 
By induction,
$$\Bigl|\bigcup_{Q \in {\mathcal Q}_\tau}\ipi{G_\tau|Q, \tau}\Bigr|=|\ipi{L\norm {G_\tau}R|R, \tau}| \,.$$
Since $L$ is a $\pi$-group and $R$ is a $\pi'$-subgroup,
we have that $L\norm GR=\norm G{LR}$. 
Also, $\norm G{LR}_\tau=L\norm G R_\tau$. By Lemma \ref{gvert},
we  have that 
$$|\ipi{L\norm {G_\tau}R|R, \tau}|=|\ipi{L\norm {G}R|R, \tau}|  \,.$$
Also,
$$|\ipi{L \norm GR|R, \varphi}|= \sum_{\tau \in {\c A}}
|\ipi{L\norm {G}R|R, \tau}| \, ,$$
by using the first paragraph of the proof of Theorem \ref{deep}.
By Theorem \ref{deep},
$$|\ipi{L \norm GR|R, \varphi}|=|\ipi{\norm GR|R, \varphi}| \, .$$
Therefore,
$$\sum_{\tau \in {\c A}} \Bigl|\bigcup_{Q \in {\mathcal Q}_\tau}\ipi{G_\tau|Q, \tau}\Bigr|
=|\ipi{\norm GR|R, \varphi}| \, .$$
We are left to show that
$$\Bigl|\bigcup_{Q \in {\mathcal Q}}\ipi{G|Q, \varphi}\Bigr|=\sum_{\tau \in {\c A}} \Bigl|\bigcup_{Q \in {\mathcal Q}_\tau}\ipi{G_\tau|Q, \tau}\Bigr| \, .$$

Let $\c R$ be a complete set of representatives of $\norm GR$-orbits in $\c Q$, and notice that 
$$\bigcup_{Q \in {\mathcal Q}}\ipi{G|Q, \varphi} = \bigcup_{Q \in {\mathcal R}}\ipi{G|Q, \varphi}$$ is a disjoint union. Indeed, if $\mu \in \ipi{G|Q_1, \varphi} \cap \ipi{G|Q_2, \varphi}$
for $Q_i \in \c Q$, then we have that $Q_1=Q_2^g$ for some $g \in G$ by the uniqueness
of vertices. Hence $R^g$ and $R$ are Carter subgroups of $Q_1$, and therefore
$R^{gx}=R$ for some $x \in Q_1$. It follows that $Q_1=Q_2^{gx}$ are $\norm GR$-conjugate.

Now fix $Q \in \c R$. For each $\mu \in \ipi{G|Q,\varphi}$,
we claim that there is a unique $\tau \in \c A$
such that $\mu_\tau \in \ipi{G_\tau|Q^x, \tau}$, for some $x \in \norm GR$.  We know that 
there is $\nu \in \irr{L|\varphi}$ such that $\mu_\nu \in \ipi{G_\nu|Q, \nu}$ by Lemma \ref{gvert}(a).
Now, $\nu^x=\tau$ for some $x \in \norm GR$ and $\tau \in \c A$, and it follows
that $\mu_\tau \in \ipi{G_\tau|Q^x, \tau}$. Suppose that 
$\mu_\epsilon \in \ipi{G_\epsilon|Q^y, \epsilon}$, for some $y \in \norm GR$ and $\epsilon \in \c A$. Now, $\epsilon=\tau^g$ for some $g \in G$, by Clifford's theorem. Thus $Q^{xgt}=Q^y$ for some $t \in G_\epsilon$, by the uniqueness of vertices. Thus $xgty^{-1} \in \norm GQ$. Since $R$ is a Carter subgroup of $Q$, by the  Frattini argument
we have that $xgty^{-1}=qv$,
where $q \in Q$ and $v \in G$ normalizes $Q$ and $R$.
Since $Q^x$ fixes $\tau$, then $Q$ fixes $\tau^{x^{-1}}$.
Now
\[\epsilon^{y^{-1}}=(\tau^{gt})^{y^{-1}}=\tau^{x^{-1}xgty^{-1}}=\tau^{x^{-1}qv}=\tau^{x^{-1}v} \, .\]
So $\epsilon$ and $\tau$ are $\norm GR$-conjugate, and thus they are equal.

Now we define a map
$$f: \bigcup_{Q \in {\mathcal R}}\ipi{G|Q, \varphi}
\rightarrow 
\bigcup_ {\tau \in {\c A}}\left(\left( \bigcup_{Q \in {\mathcal Q}_\tau}\ipi{G_\tau|Q, \tau}\right) \times \{ \tau\}\right) $$
given by
$f(\mu)=(\mu_\tau, \tau)$, 
where $\tau \in \c A$ is the unique element in $\c A$
such that $\mu_\tau \in \ipi{G_\tau|Q^x, \tau}$, for some $x \in \norm GR$. Since $\mu_\tau^G=\mu$, we have that $f$ is injective.
If we have that $\gamma \in \bigcup_{Q \in {\mathcal Q}_\tau}\ipi{G_\tau|Q, \tau}$ then $\gamma^G
\in \bigcup_{Q \in {\mathcal Q}}\ipi{G|Q, \varphi}$, so $f$ is surjective. 
\end{proof}

Some of the difficulties in Theorem~\ref{thm} are caused by the fact that Clifford correspondence does not necessarily respect
vertices, even in quite restricted situations. Suppose that $N$ is a normal $p'$-subgroup of $G$, $\tau \in \irr N$, $Q$
is a $p$-subgroup of $G$ and $\tau$ is $Q$-invariant. Then
it is not necessarily true that induction defines a bijection
$\ibr{G_\tau|Q,\tau} \rightarrow \ibr{G|Q,\tau}$.
For instance, take $p=2$ and $G={\tt SmallGroup}(216,158)$.
This group has a unique normal subgroup 
$N$ of order 3. The Fitting subgroup $F$ of $G$ is $F=N \times M$,
where $M$ is a normal subgroup of type $C_3 \times C_3$,
and $G/F=D_8$.  Let $1 \ne \tau \in \irr N$. Then $G_\tau \nor G$
has index 2, and $G_\tau /N=S_3 \times S_3$.
Now $\tau$ has a unique extension $\hat\tau \in \ibr{G_\tau}$.
The group $G_\tau$ has three conjugacy classes of subgroups $Q$ of order 2.
Take $Q_1$ that corresponds to $C_2 \times 1$ and $Q_2$ that corresponds
to $1 \times C_2$. Then $Q_1$ and $Q_2$ are not $G_\tau$-conjugate
but $G$-conjugate. So $|\ibr{G_\tau|Q_1,\tau}|=1$ and 
$|\ibr{G|Q_1,\tau}|=2$.

\section{A canonical bijection}

\medskip   
If $G$  has a normal Hall $\pi$-subgroup,
then we have a canonical bijection in Theorem~\ref{thm}. This seems worth to be explored.

\medskip 

\begin{lem}\label{A}
Suppose that $G=NH$ where $N$ is a normal $\pi$-subgroup and $H$ is a $\pi'$-subgroup. Then $\norm GQ=\cent NQ\norm HQ$ for every $Q\le H$.
\end{lem}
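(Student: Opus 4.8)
The plan is to use a Frattini-type argument built on the Schur--Zassenhaus theorem applied inside the section $QN$. First note $QN \nor G$ is not needed; instead observe that $Q \le H$ is a $\pi'$-group and $N$ is a normal $\pi$-group, so $QN$ is a group in which $N$ is a normal Hall $\pi$-subgroup with complement $Q$. The inclusion $\cent NQ\norm HQ \subseteq \norm GQ$ is immediate, since $\cent NQ$ centralizes $Q$ and $\norm HQ$ normalizes $Q$ by definition. The content is the reverse inclusion.

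For the reverse inclusion, take $g \in \norm GQ$ and write $g = nh$ with $n \in N$, $h \in H$ (possible since $G = NH$). Then $Q^g = Q$ gives $Q^{nh} = Q$, hence $Q^n = Q^{h^{-1}} \le H$, so $Q^n \le QN \cap H$. Now $Q$ and $Q^n$ are both complements to the normal Hall $\pi$-subgroup $N$ in $QN$; since $N$ is solvable (it is a $\pi$-group of nilpotency-irrelevant but in any case solvable? — no, $N$ need not be solvable, but Schur--Zassenhaus conjugacy of complements only needs \emph{one} of $N$ or the complement to be solvable, and $Q$ here is a $\pi'$-group — still not obviously solvable). The cleaner route: $N$ being a normal $\pi$-subgroup and $Q$ a $\pi'$-group, by the Schur--Zassenhaus theorem all complements to $N$ in $QN$ are $N$-conjugate (this conjugacy part of Schur--Zassenhaus holds as soon as $N$ or $QN/N$ is solvable, and here I would invoke the Feit--Thompson theorem to guarantee one of the two coprime factors is solvable, which is the standard way this lemma is used). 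Thus there is $m \in N$ with $Q^{nm} = Q$, i.e. $nm \in \norm NQ = \cent NQ$ (a $\pi$-group normalizing a $\pi'$-group centralizes it? — no; rather $\norm NQ/\cent NQ$ embeds in $\Aut Q$ and has order dividing $|N|$ which is a $\pi$-number, while... this does not force it trivial).

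Let me restart the endgame more carefully: I would instead argue that $\norm{QN}{Q} = Q\,\cent NQ$ by a direct Frattini argument inside $QN$, and then lift. Explicitly: for $g = nh \in \norm GQ$, we have $Q^n \le H$, and $Q, Q^n$ are complements to $N$ in $QN \nor QN$, so $Q^n = Q^m$ for some $m \in N$ by Schur--Zassenhaus conjugacy; then $nm^{-1} \in \norm NQ$, and $\norm NQ = \cent NQ$ by coprimeness (a standard consequence: $[\norm NQ, Q] \le N \cap Q' \le N \cap QN$... ) — the correct standard fact here is that for coprime action, $\norm NQ = \cent NQ$ whenever $N$ is a \emph{normal} subgroup and $Q$ acts coprimely, which follows since $\norm NQ$ is $Q$-invariant with $[\norm NQ, Q]$ a $\pi$-group lying in $Q \cap N = 1$. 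Hence $n \in \cent NQ \cdot m^{-1} \cdot m = \cent NQ$, wait — $n = (nm^{-1})m$ with $nm^{-1} \in \cent NQ$ and $m \in N$; I need $m$ itself controlled. Since $Q^m = Q^n \le H$ but also $Q^m$ need not normalize anything. The right conclusion: $n \in \cent NQ$ is false in general; rather $g = nh$ with $Q^n = Q^{h^{-1}}$ forces, after replacing $h$ by $mh$ and $n$ by $nm^{-1} \in \cent NQ$, that $h \in \norm HQ$. So $g = (nm^{-1})(mh)$ — but $mh \in NH = G$ need not lie in $H$. The fix: work instead with $g = hn'$ ($h \in H$, $n' \in N$, using $G = HN$); then $Q^g = Q$ gives $(Q^h)^{n'} = Q$, and $Q^h \le H$, so $Q^h$ and $Q$ are $N$-conjugate complements, $Q^h = Q^{n''}$, whence $hn''^{-1}$...

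\medskip
\noindent\emph{Streamlined plan.} I would (i) prove the auxiliary fact $\norm NQ = \cent NQ$ from coprime action and $Q \cap N = 1$; (ii) prove the key sublemma $\norm{QN}{Q} = Q\,\cent NQ$ via a Frattini argument using Schur--Zassenhaus conjugacy of complements to $N$ in $QN$; (iii) for general $g \in \norm GQ$, write $g = hn$ with $h \in H$, $n \in N$, note $Q^h \le H$ and both $Q, Q^h$ are complements to $N$ in $QN$, so $Q^h = Q^{n_0}$ for some $n_0 \in N$ with $n_0 \in \cent NQ$ by (i)–(ii) after adjusting; deduce $h \in \norm HQ$ and $g \in \cent NQ \norm HQ$. \textbf{The main obstacle} is bookkeeping the order of the factors $N$ and $H$ in $G = NH = HN$ so that the conjugating elements land in $H$ rather than merely in $G$; the representation-theoretic substance is entirely standard (coprime action plus Schur--Zassenhaus), and the solvability needed for the conjugacy part of Schur--Zassenhaus is supplied, in the generality of this paper, by Feit--Thompson.
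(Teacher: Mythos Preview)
Your plan has a genuine gap in step~(iii). Schur--Zassenhaus conjugacy of complements gives you only that $Q^h = Q^{n_0}$ for \emph{some} $n_0 \in N$; it does not give $n_0 \in \cent NQ$. The element $n_0$ is determined only up to a left factor from $\norm NQ = \cent NQ$, so ``adjusting'' cannot force $n_0$ into $\cent NQ$ unless you already know $Q^h = Q$, which is exactly the conclusion you want. In other words, the conjugacy part of Schur--Zassenhaus is strictly weaker than what you need here, and neither your sublemma~(i) nor~(ii) bridges that gap. The appeal to Feit--Thompson is therefore not just heavy but also ineffective.

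The missing observation is the one you brushed past early on: from $N \cap H = 1$ one has $QN \cap H = Q$ (for instance, $QN\cap H$ is a $\pi'$-group containing $Q$, and its order divides $|QN|_{\pi'}=|Q|$). This is precisely the paper's argument: it shows $Q = QN \cap H \nor N\norm GQ \cap H$, so $N\norm GQ \cap H \le \norm HQ$; then writing $g = xh$ with $x\in N$, $h\in H$, one gets $h = x^{-1}g \in N\norm GQ \cap H \le \norm HQ$, whence $x \in \norm NQ$. Your auxiliary fact~(i), that $\norm NQ = \cent NQ$ because $[\norm NQ,Q]\le N\cap Q = 1$, is correct and is exactly what the paper uses to pass from $\norm NQ\norm HQ$ to $\cent NQ\norm HQ$. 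No Schur--Zassenhaus (and hence no Feit--Thompson) is needed anywhere.
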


\medskip

\begin{proof}
First note that
\[Q=Q(N\cap H)=QN\cap H\nor N\norm GQ\cap H\le\norm HQ.\]
Let $xh\in\norm GQ$ where $x\in N$ and $h\in H$. Then $h=x^{-1}(xh)\in N\norm GQ\cap H\le\norm HQ$. This shows $\norm GQ=\norm NQ\norm HQ=\cent NQ\norm HQ$.
\end{proof}

\medskip

\begin{lem}\label{B}
Suppose that $G=NH$ where $N$ is a normal $\pi$-subgroup and $H$ is a solvable $\pi'$-subgroup.
Let $R\le H$, and let $\tau \in \irr{\cent NR}$ be such that 
$\norm GR_\tau=\cent NR \times R$.
Let $\gamma \in {\rm Irr}_R(N)$ be the Glauberman
correspondent of $\tau$. Then  $R=\norm {H_\gamma}R$.
\end{lem}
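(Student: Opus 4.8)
The plan is to establish the two inclusions $R\subseteq\norm{H_\gamma}R$ and $\norm{H_\gamma}R\subseteq R$ separately; the first is a triviality and all the content sits in the second.

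For the easy inclusion: by definition $\gamma\in\Irr_R(N)$, so $R$ fixes $\gamma$ and obviously normalizes itself, whence $R\subseteq\norm{H_\gamma}R$. I record here a few facts used below. Since $R\le H$, the group $R$ is a solvable $\pi'$-group and $N$ is a $\pi$-group, so $(|R|,|N|)=1$ and the $R$-Glauberman correspondence $\Irr_R(N)\to\irr{\cent NR}$, $\delta\mapsto\delta^{*}$, is defined and satisfies $\gamma^{*}=\tau$. Moreover $R\cap N=1=H\cap N$ and $R$ centralizes $\cent NR$, so $\cent NR\,R=\cent NR\times R$ is an internal direct product inside $G$.

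For the substantial inclusion, I would fix $h\in\norm{H_\gamma}R$, so that $h\in H$ normalizes $R$ and fixes $\gamma$, and set $M=R\langle h\rangle\le H$, a solvable $\pi'$-group with $R\nor M$, acting by conjugation on $N$ with $(|M|,|N|)=1$. Because $R\nor M$, the group $M$ permutes both $\Irr_R(N)$ and $\irr{\cent NR}$, and the $R$-Glauberman correspondence is an isomorphism of $M$-sets by the last clause of the Glauberman correspondence theorem quoted above (applied with $S=M$, $T=R$ acting on $N$). Applying this equivariance to the element $h$ gives $\tau^{h}=(\gamma^{*})^{h}=(\gamma^{h})^{*}=\gamma^{*}=\tau$, so $h\in\norm GR_\tau=\cent NR\times R$ by hypothesis.

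It then remains to deduce $h\in R$: writing $h=ab$ with $a\in\cent NR\le N$ and $b\in R$, where $a$ and $b$ commute, $a$ has $\pi$-order and $b$ has $\pi'$-order, so the order of $h$ equals $o(a)o(b)$; since $h\in H$ is a $\pi'$-element this forces $a=1$, i.e.\ $h=b\in R$. Hence $\norm{H_\gamma}R\subseteq R$, as needed. The only point requiring any care is the equivariance of the Glauberman correspondence under $M=R\langle h\rangle$, which is precisely the ``isomorphism of $S$-sets'' statement already available to us; everything else is routine bookkeeping with coprime orders.
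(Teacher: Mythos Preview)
Your proof is correct and follows essentially the same approach as the paper's: both use the equivariance of the $R$-Glauberman correspondence under a group normalizing $R$ to deduce that any element of $\norm{H_\gamma}R$ fixes $\tau$, and then conclude via the hypothesis $\norm GR_\tau=\cent NR\times R$ together with the $\pi/\pi'$ split. The only cosmetic difference is that the paper argues by contradiction with a subgroup $S$ satisfying $R\nor S\le H_\gamma$, whereas you work element-by-element with $M=R\langle h\rangle$; the underlying idea is identical.
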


\medskip

\begin{proof}
Suppose that $R<S \le H_\gamma$, where $R \nor S$.
Then $S$ acts on the $R$-Glauberman correspondence.
Since $S$ fixes $\gamma$, therefore it fixes $\gamma^*=\tau$.
But this gives the contradiction 
$S \sbs \norm GR_\tau=\cent NR \times R$.
\end{proof}

\medskip

\begin{thm}\label{C}
Suppose that $G=NH$ where $N$ is a normal $\pi$-subgroup and $H$ is a solvable $\pi'$-subgroup.
Let $R$ be a nilpotent subgroup 
of $H$.
Let $\mathcal Q$ be the set of subgroups $Q \sbs H$ 
such that $R$
is a Carter subgroup of $Q$.
Then there is a natural bijection
$$\bigcup_{Q \in {\mathcal Q}}\ipi{G|Q} \rightarrow \ipi{\norm GR|R} \,.$$
\end{thm}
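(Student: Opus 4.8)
The plan is to realize the bijection as an explicit composite of three canonical correspondences: the Clifford correspondence for $\pi$-partial characters relative to $N$ (Theorem \ref{ipicliff}(b)), made compatible with vertices by Lemma \ref{gvert}; the Glauberman correspondence $\Irr_R(N)\to\irr{\cent NR}$; and, at the bottom, an elementary identification of two one-element sets. Write $N=\oh\pi G$. Arguing as in the proof of Theorem \ref{thm}, the source $\bigcup_{Q\in\mathcal Q}\ipi{G|Q}$ decomposes, bijectively via the map $f\colon\mu\mapsto(\mu_\gamma,\gamma)$, as a disjoint union over a set $\mathcal A$ of representatives of the $\norm GR$-orbits of $R$-invariant characters $\gamma\in\irr N$ of the sets $\bigcup_{Q'\in\mathcal Q_\gamma}\ipi{G_\gamma|Q',\gamma}$; here $\mu_\gamma$ is the Clifford correspondent of $\mu$ over $\gamma$, $G_\gamma=NH_\gamma$ with $H_\gamma=H\cap G_\gamma$ again a solvable Hall $\pi'$-complement containing $R$, and $\mathcal Q_\gamma=\{Q'\le H_\gamma : R\text{ is a Carter subgroup of }Q'\}$. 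The well-definedness of the $\norm GR$-orbit of $\gamma$ attached to $\mu$, and the injectivity and surjectivity of $f$, are exactly the Frattini-argument bookkeeping carried out in the proof of Theorem \ref{thm}, which uses that $R$ is a Carter subgroup of each $Q\in\mathcal Q$, so that $\norm GQ=Q\,(\norm GQ\cap\norm GR)$. Dually, the target $\ipi{\norm GR|R}$ decomposes over the $\norm GR$-orbits of $\irr{\cent NR}$ (every such character is automatically $R$-invariant, since $\cent NR$ and $R$ are both normal in $\norm GR$) as a disjoint union of the sets $\ipi{\norm GR|R,\tau}\cong\ipi{(\norm GR)_\tau|R,\tau}$, the isomorphism being the Clifford correspondence relative to $\cent NR$. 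The Glauberman correspondence matches the two index sets $\norm GR$-equivariantly by $\tau=\gamma^*$, and by Lemma \ref{A} and the equivariance of Glauberman, $(\norm GR)_\tau=\norm{G_\gamma}R=\cent NR\,\norm{H_\gamma}R$; combined with Lemma \ref{B} this shows that the target piece over $\tau$ is nonempty precisely when $\norm GR_\tau=\cent NR\times R$, equivalently when $R$ is a Carter subgroup of $H_\gamma$. So it remains to produce, for each $\gamma\in\mathcal A$, a natural bijection
\[\bigcup_{Q'\in\mathcal Q_\gamma}\ipi{G_\gamma|Q',\gamma}\ \longleftrightarrow\ \ipi{\norm{G_\gamma}R|R,\tau};\]
as $\gamma$ is $G_\gamma$-invariant, this is exactly the situation treated below with $G_\gamma$ in place of $G$, so we may assume $\gamma\in\irr N$ is $G$-invariant.

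In that case I claim both sides are empty unless $R$ is a Carter subgroup of $H$, in which case each is a single character. For the source: $NH=G$ is normal in $G$ and $\gamma$ is $G$-invariant, so by Lemma 6.30 of \cite{I2} every irreducible $\pi$-partial character over $\gamma$ has a vertex containing a conjugate of the Hall $\pi'$-subgroup $H$, hence a vertex equal to a conjugate of $H$; thus $\ipi{G|\gamma}=\ipi{G|H,\gamma}$ and $\ipi{G|Q',\gamma}=\emptyset$ for every proper $Q'\le H$. Passing by a character triple isomorphism (Lemma 3.11 of \cite{I2}) to a triple in which $N$ is central, the resulting group has a $\pi'$ quotient over a central $\pi$-subgroup, so its $\pi$-elements are confined to that subgroup, and it follows that $|\ipi{G|\gamma}|=1$. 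Hence $\bigcup_{Q'\in\mathcal Q}\ipi{G|Q',\gamma}$ is a singleton if $H\in\mathcal Q$, i.e.\ if $\norm HR=R$, and is empty otherwise. The same analysis inside $\norm GR=\cent NR\rtimes\norm HR$ (using Lemma \ref{A} for the semidirect decomposition) shows $\ipi{\norm GR|\tau}$ consists of a single character, of vertex a conjugate of $\norm HR$, so that $\ipi{\norm GR|R,\tau}$ is that character when $\norm HR=R$ and is empty otherwise. When both sides are nonempty we match their unique members; otherwise there is nothing to do. Reassembling through the two Clifford correspondences and the Glauberman correspondence then yields the desired bijection.

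The delicate point throughout is that the Clifford correspondence does not in general respect vertices: as the example following Theorem \ref{thm} shows, $\ibr{G_\tau|Q,\tau}\to\ibr{G|Q,\tau}$ need not be surjective even for a normal $\pi'$-subgroup, so Clifford correspondences cannot be composed blindly. The hypothesis that $R$ is a Carter subgroup of every $Q\in\mathcal Q$ is precisely what pins down, for each source character $\mu$, a well-defined $\norm GR$-orbit of characters of $N$ together with a compatible vertex, and makes the map $f$ a bijection. Checking that this interacts correctly with the Glauberman correspondence — so that stabilizers and Glauberman-paired characters line up as dictated by Lemmas \ref{A} and \ref{B} — and with the singleton identification of the invariant case, so that the final composite is genuinely well defined, independent of the chosen orbit representatives, and bijective, is the technical core of the argument. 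I would also emphasize that it is the split hypothesis $G=NH$ that makes the invariant case collapse to singletons, and that this is what lets one avoid the Dade–Puig theorem (Theorem \ref{deep}) entirely, in contrast with the proof of Theorem \ref{thm}.
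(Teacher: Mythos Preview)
Your argument is correct and assembles the same bijection as the paper---Clifford correspondence on each side, glued by the $R$-Glauberman map, with the core observation that over a fixed invariant $\gamma$ the stabilizer satisfies $G_\gamma=NH_\gamma$ so that only $Q=H_\gamma$ can occur as a vertex and the relevant $\ipi{}$-sets are singletons. The paper presents this more directly: it simply sends $\phi\in\ipi{G|Q}$ to $(\theta^*\times 1_R)^{\norm GR}$, reads off $G_\theta=QN$ from the vertex condition (via Corollary~8.16 of \cite{I1}), and for the singleton step uses the elementary remark that $\pi$-partial characters of $QN$ are already determined on $N=(QN)^0$, so your detour through the map $f$ of Theorem~\ref{thm} and a character-triple isomorphism is not needed here.
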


\medskip

\begin{proof}
Let $Q \in \mathcal Q$.
By the Frattini argument, notice
that $\norm GQ =Q(\norm GQ \cap \norm GR)$,
and that $Q \cap (\norm GQ \cap \norm GR)=R$.

Let $\phi \in \ipi{G|Q}$. By Lemma~\ref{gvert}, there exists a $Q$-invariant $\theta \in \irr N$
under $\phi$. Then $T=G_\theta=QN$ using Corollary~8.16 in \cite{I1} for instance. 
If $\theta_1$ is another such choice, then $\theta_1=\theta^g$ for some
$g \in \norm GQ$. 
Thus, we may assume that $g \in \norm GQ \cap \norm GR$.
Let $\theta^* \in \cent NR$ be the $R$-Glauberman correspondent
of $\theta$. Now, by Lemma \ref{A} applied in $T$,
we have that $\norm TR=\cent NR \norm QR=\cent NR \times R$. 
We claim that $\norm TR$ is the stabilizer of $\theta^*$ in $\norm GR$.
If $x \in \norm GR$ fixes $\theta^*$, then $x$ fixes $\theta$, and thus $x \in \norm TR$,
as claimed.  Now $\phi^*:=(\theta^*\times 1_R)^{\norm GR}$ is irreducible,
and belongs to $\ipi{\norm GR|R}$. Since $\theta_1$ is $\norm GQ \cap \norm GR$-conjugate to $\theta$, $\phi^*$ is independent of the choice of $\theta$. 

Suppose that $\phi^*=\mu^*$, where $\phi \in \ipi{G|Q_1}$
and $\mu \in \ipi{G|Q_2}$, where $R$ is a Carter subgroup of $Q_i$
and $Q_i \sbs H$.  Suppose that we picked $\theta$
for $\phi$ and $\epsilon$ for $\mu$, so that
$\phi^*=(\theta^* \times 1_R)^{\norm GR}$ and 
$\mu^*=(\epsilon^* \times 1_R)^{\norm GR}$.
Then $\norm GR= \cent NR\norm HR$, and $\theta^*$ and $\epsilon^*$
are $\norm HR$-conjugate, say $(\theta^*)^x=\epsilon^*$.  Then $\theta^x=\epsilon$.
By replacing $(Q_1,\theta)$ by $(Q_1^x,\theta^x)$, we may assume that
$\theta=\epsilon$.  But then $Q_1=H_\theta=H_\epsilon=Q_2$. Since $\pi$-partial character are determined on the $\pi$-elements, we must have $\phi=\mu$ now.

Suppose conversely that  $\tau \in \ipi{\norm GR|R}$. Then $\tau$ 
is induced from $\cent NR \times R$. 
Let $\mu \in \irr{\cent NR}$ such that $\mu \times 1_R$ induces
$\tau$. Then the stabilizer of $\mu$ in $\norm GR$ is
$\cent NR \times R$. If $\rho \in {\rm Irr}_R(N)$ is the $R$-Glauberman correspondent
of $\mu$, then by Lemma~\ref{B} we know that $R$ is a Carter subgroup
of $Q=H_\rho$, where $Q$ is the stabilizer in $H$ of $\rho$. 
Thus with the notation of the first part of the proof we obtain $\tau=\mu^*$ where $\mu$ is induced from $G_\rho=QN$.
\end{proof}

\end{document}